\newtheorem{thm}{Theorem}[section]{\bf }{\it }
\newtheorem{prop}[thm]{Proposition}{\bf }{\it }
{\bf }{\it }
{\bf }{\it }
\newtheorem{conj}[thm]{Conjecture}{\bf }{\it }
\newtheorem{quest}[thm]{Question}{\bf }{\it }
{\bf }{\it }
\newtheorem{cor}[thm]{Corollary}{\bf }{\it }
{\bf }{\it }
\newtheorem{lem}[thm]{Lemma}{\bf }{\it }
{\bf }{\it }
\newtheorem{fct}[thm]{Fact}{\bf }{\it }
\theoremstyle{definition}
\newtheorem{defn}[thm]{Definition}{\bf }{\rm }
\newtheorem{rem}[thm]{Remark}{\bf }{\rm }
{\bf }{\rm }
\def\wha{\widehat{a}}
\def\whb{\widehat{b}}
\def\whc{\widehat{c}}
\def\whd{\widehat{d}}
\def\gcd{\mathrm{gcd}}
\def\rad{\mathrm{rad}}
\def\Z{\mathbb{Z}}
\def\Zi{\mathbb{Z}[\imagi]}
\def\R{\mathbb{R}} 
\def\N{\mathbb{N}} 
\def\mod#1{{\;(\mathrm{mod}\;#1)}}
\def\imagi{\boldsymbol{\mathit i}}
\def\imagj{\boldsymbol{\mathit j}}
\def\imagk{\boldsymbol{\mathit k}}
\def\Tr{\mathrm{Tr}} 
\newcommand{\Hu}{\mathbb{I}}
\setlist[itemize]{leftmargin=2em}
\setlist[enumerate]{leftmargin=2em}
\newlength{\negph@wd}
\DeclareRobustCommand{\negphantom}[1]{%
	\ifmmode
	\mathpalette\negph@math{#1}%
	\else
	\negph@do{#1}%
	\fi
}
\newcommand{\negph@math}[2]{\negph@do{$\m@th#1#2$}}
\newcommand{\negph@do}[1]{%
	\settowidth{\negph@wd}{#1}%
	\hspace*{-\negph@wd}%
}
\def\Wdiff#1#2{\phantom{#2}\negphantom{#1}}
\begin{document}	
	
\title
{Strong $n$-conjectures over rings of integers}

\author{Rupert H\"olzl}
\address{Rupert~H\"olzl and Sören~Kleine, Fakult\"at f\"ur Informatik,
	Universit\"at der Bundeswehr M\"unchen, 
	Neubiberg, Germany}
\email{r@hoelzl.fr}
\email{soeren.kleine@unibw.de}

\author{Sören Kleine}
\address{Frank~Stephan, Department of Mathematics \& School of Computing,
	National University of Singapore, Singapore 119076,
	Republic of Singapore}
\email{fstephan@comp.nus.edu.sg}
\thanks{F.~Stephan's research was supported by the Singapore Ministry of Education AcRF Tier 2 grant MOE-000538-01 and AcRF Tier 1 grants A-0008454-00-00 and A-0008494-00-00.}

\author{Frank Stephan}

\begin{abstract}
We study diophantine equations of the form 
${a_1 + \ldots + a_n = 0}$ where the $a_i$'s are assumed to be coprime and to satisfy certain subsum conditions. We are interested in the limit superior of the qualities of the admissible solutions of these equations, a question that in the case ${n = 3}$ is closely related to the famous $abc$-conjecture. In a previous article, we studied multiple versions of this problem over the ring of rational integers, summarising known results and proving stronger lower bounds. In this article we extend our work to the rings of the Gaussian integers and the Hurwitz quaternions, where a somewhat different picture emerges. In particular, we establish much stronger lower bounds on qualities than for the rational integers.
\end{abstract}

\maketitle

\section{Introduction} \label{section:notation} 
The $abc$-conjecture \cite{Mas85,Oes88,Wal15} is a well-known open problem in mathematics. It postulates that there is no constant $q > 1$ such that there exist infinitely many triples $(a,b,c)$ of coprime and non-zero integers with $a+b+c=0$ and such that a so-called ``quality'' of $(a,b,c)$ exceeds $q$. The conjecture itself is rather well studied but still unresolved. However, on the way towards partial solutions, various variants of the original problem were formulated and conjectures about the achievable qualities in these cases were made.

In a precursor to the present article~\cite{paper1}, we summarised what was known about these variants, and proved new and stronger results. In this article, we extend this work to more general settings, namely to other rings of integers besides~$\Z$. Before we go into details about the new questions we will investigate, we begin by recalling the basic notations used for the (strong) $n$-conjectures over the rational integers that were studied in the previous article. 
\begin{defn} \label{def:quality_klassisch} 
	Let ${n \in \Z}$ be a non-zero integer. The \emph{radical} $\rad(n)$ of $n$ is defined as the largest square-free positive divisor of $n$. 
	
	Next, let ${a = (a_1,\ldots,a_n) \in \Z^n}$, ${n \ge 3}$ be such that ${a_1, \ldots, a_n \ne 0}$. Then we define the \emph{quality} of $a$ as  
	\[q(a)  = 
	\frac{\log(\max(|a_1|,\ldots,|a_n|))}{\log\,\rad(a_1 
		\cdot \ldots \cdot a_n)}. \]
	
	Finally, for a sequence of $n$-tuples ${A = \{a^{(1)}, a^{(2)},\ldots\}  \subseteq \Z^n}$, ${n \ge 3}$, we let the {\em quality}
	of $A$ be defined as
	$$
	   Q_A = \limsup_{k \rightarrow \infty} q(a^{(k)}). 
	$$
\end{defn} 
In the previous  as well as in the present article we study different conjectures which make predictions on the qualities of certain sets~$A$ of $n$-tuples of integers. Different combinations of the following properties will be used to specify these sets.
\begin{defn}
	Let $a = (a_1, \ldots, a_n) \in \Z^n$ be such that ${a_1, \ldots, a_n \ne 0}$. Let ${F \subseteq \mathbb{N}}$ be a finite set such that ${\min F \geq 3}$ in case that ${F \ne \emptyset}$. We introduce the following predicates on $a$: \begin{enumerate}[leftmargin=3em]
		\item[(Z)] $a_1+\ldots+a_n=0$;
		\item[(S1)] there are no ${b_1,\ldots,b_n \in \{0,1\}}$ and $i,j$ with 
				      ${1\leq i,j\leq n}$ such that ${b_i=0}$ and ${b_j=1}$ and 
				      ${\sum_{k=1}^n b_k \cdot a_k = 0}$;
		\item[(S2)] there are no $b_1,\ldots,b_n \in \{-1,0,1\}$ and $i,j$ with
				$1\leq i,j\leq n$ such that $b_i=0$ and $b_j=1$ and
				$\sum_{k=1}^n b_k \cdot a_k = 0$;
		\item[(G1)] $\gcd(a_1,\ldots,a_n) = 1$;
		\item[(G2)] $\gcd(a_i,a_j) = 1$ for all $i,j$ with ${1 \leq i < j \leq n}$;
		\item[(F)] none of the numbers $a_1,\ldots,a_n$ is a multiple of any number in $F$.
	\end{enumerate}
\end{defn}
The conjectures studied in the previous article~\cite{paper1} typically concerned the sets ${A \subseteq \Z^n}$ defined via property~(Z) and one of the two \emph{subsum conditions}~(S1) and~(S2), together with one of the two \emph{gcd conditions}~(G1) and (G2). The condition~(F) was newly introduced in that article~\cite{paper1}. We now make this more precise. 
\begin{conj}[$n$-conjecture; Browkin and Brzezi\'nski~\cite{BB94}] \label{conj:bb} $\,$ \\ 
	Let ${n \geq 3}$ and let ${A(n) \subseteq \Z^n}$ be the set of $n$-tuples 
	${a = (a_1,\ldots,a_n)}$ satisfying conditions~\textnormal{(Z)}, \textnormal{(S1)} and \textnormal{(G1)}. 
	Then $Q_{A(n)}=2n-5$ for every $n$.  
\end{conj}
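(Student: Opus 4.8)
The statement is an equality, so I would prove it as two matching inequalities: the lower bound $Q_{A(n)} \geq 2n-5$ and the upper bound $Q_{A(n)} \leq 2n-5$. These are of very different character. Already for $n=3$ the target value is $2\cdot 3-5 = 1$, and the upper bound $Q_{A(3)} \leq 1$ says exactly that $\max|a_i| \leq \rad(a_1a_2a_3)^{1+\varepsilon}$ for all but finitely many primitive triples, i.e.\ it is the $abc$-conjecture itself. Hence no \emph{unconditional} proof of the equality can be expected; my plan is to establish the lower bound unconditionally by an explicit construction, and to derive the upper bound conditionally on the $abc$-conjecture by an induction on $n$, isolating at the outset the one place where even this conditional argument meets a genuine obstruction.

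For the lower bound I would first treat $n=3$ with an explicit family and then bootstrap. Taking $a_1 = 3^{2^k}$, $a_2 = -1$, $a_3 = -(3^{2^k}-1)$ gives (Z), and since the $2$-adic valuation of $3^{2^k}-1$ equals $k+2$ one has $a_3 = -2^{k+2}t_k$ with $t_k$ odd and coprime to $3$; this yields (G1) (indeed (G2)) and, since all three proper subsums are visibly nonzero, (S1). Here $\max|a_i| = 3^{2^k}$ while $\rad(a_1a_2a_3) \le 6\,t_k$, so
\[
   q(a) \;\ge\; \frac{2^{k}\log 3}{\,2^{k}\log 3 + \log 6 - (k+2)\log 2\,} \;\longrightarrow\; 1 ,
\]
and $Q_{A(3)} \ge 1$ follows. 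For general $n$ I would use the recursive construction of Browkin and Brzezi\'nski~\cite{BB94}: starting from an extremal $(n-1)$-term solution, one replaces a single summand by a difference of two suitably chosen high powers, so that the new term and its partner contribute only to lower order in the radical while the maximum grows. The content of this step is the verification that it raises the quality by $2-o(1)$ and preserves (Z), (S1) and (G1); iterating it from the $n=3$ base then reaches $2n-5$ in the limit.

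For the upper bound I would argue by induction on $n$, assuming the $abc$-conjecture (the case $n=3$) and reducing an $n$-tuple to an $(n-1)$-tuple. Given $a \in A(n)$ with $|a_n|$ maximal, set $s = a_{n-1}+a_n$; property (S1) guarantees $s \ne 0$, so $(a_1,\ldots,a_{n-2},s)$ satisfies (Z) and can be fed (after clearing common factors and checking its subsum structure) into the inductive bound, while $abc$ applied to the primitive part of the triple $(a_{n-1},a_n,-s)$ controls the two removed terms. Combining the two estimates is designed to add exactly $2$ to the exponent, matching the $2n-5$ recursion, and this is the natural route by which the sharp constant should appear.

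The main obstacle lies precisely in this combination. Both auxiliary estimates are expressed through radicals containing the factor $\rad(s)$, where $s = a_{n-1}+a_n$; but $s$ need not divide the product $a_1\cdots a_n$, so $\rad(s)$ is \emph{not} controlled by $\rad(a_1\cdots a_n)$ and can be as large as $s$ itself. This ``spurious radical'' leaks out of the induction and is exactly what prevents the sharp constant $2n-5$ from following formally from $abc$. Taming it---for instance by choosing the grouping adaptively so that the paired sum is forced to have small radical, or by bounding the aggregate loss incurred over the $n-3$ inductive steps---is the crux on which a proof of the upper bound, and hence of the full equality, would stand or fall.
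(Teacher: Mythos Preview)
The statement you were asked to prove is not a theorem in the paper but a \emph{conjecture} (Conjecture~\ref{conj:bb}); the paper offers no proof of it, and indeed records immediately after its statement that only the lower bound $Q_{A(n)} \ge 2n-5$ is known, due to Browkin and Brzezi\'nski~\cite[Theorem~1]{BB94}. So there is no ``paper's own proof'' to compare against.

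Your analysis is nonetheless accurate in separating what is provable from what is not. The lower-bound half is exactly the part that is established in the literature, and your sketch---an explicit $n=3$ family pushing the quality to~$1$, followed by the Browkin--Brzezi\'nski bootstrap that gains $2$ in the exponent at each step---matches the known argument. (A tiny quibble: the $2$-adic valuation formula $v_2(3^{2^k}-1)=k+2$ holds for $k\ge 1$; at $k=0$ one gets~$1$, but this is irrelevant for the $\limsup$.)

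For the upper bound you correctly observe that already at $n=3$ it is the $abc$-conjecture, so no unconditional proof is possible. More importantly, your identification of the ``spurious radical'' $\rad(s)$ with $s=a_{n-1}+a_n$ as the obstruction to a clean induction is exactly the reason the inequality $Q_{A(n)}\le 2n-5$ is \emph{not} known to follow from $abc$ for $n\ge 4$: the auxiliary sum $s$ can have radical uncontrolled by $\rad(a_1\cdots a_n)$, and no one has found a grouping that avoids this leak. So your proposal does not contain a gap so much as it honestly isolates the open problem; the conjecture remains a conjecture.
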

It is known due to Browkin and Brzezi\'nski~\cite[Theorem~1]{BB94} that ${Q_{A(n)} \ge 2n - 5}$ for any ${n \ge 3}$. 

Browkin~\cite{Bro00} introduced the following conjecture which he referred to as ``strong $n$\nobreakdash-con\-jec\-ture.'' This is the only conjecture mentioned here which does not feature any subsum condition. 
\begin{conj}[Browkin~\cite{Bro00}] \label{conj:bro} $\,$ \\ 
	Let $n \geq3$ and let $B(n)$ be the set of $n$-tuples 
	${a = (a_1,\ldots,a_n)\in \Z^n}$ such that conditions~\textnormal{(Z)} and \textnormal{(G2)} hold for $a$. 
	Then $Q_{B(n)}<\infty$ for every $n$.
\end{conj}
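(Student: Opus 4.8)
This is Browkin's \emph{strong $n$-conjecture}, which is an open problem, so a proof proposal can only mean a conditional reduction to a better-understood conjecture. Note first that for $n = 3$ the set $B(3)$ coincides with $A(3)$: a vanishing triple of nonzero integers is pairwise coprime exactly when its overall gcd is $1$, and the subsum condition~(S1) is automatic once no entry vanishes. Hence $Q_{B(3)} < \infty$ already contains (a weak form of) the $abc$-conjecture and cannot be proved from first principles. What I would establish instead is that $Q_{B(n)} < \infty$ follows from Conjecture~\ref{conj:bb}, the $n$-conjecture, in fact with the explicit bound $Q_{B(n)} \le 2n-5$.

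The plan is a decomposition into minimal vanishing subsums. Given $a = (a_1, \dots, a_n) \in B(n)$, I repeatedly split a block of indices into two nonempty proper sub-blocks each summing to zero (the complement of a vanishing proper sub-block again sums to zero) until no block can be split further. This yields a partition of $\{1, \dots, n\}$ into blocks $S_1, \dots, S_r$ such that, after reindexing, each restricted tuple $a|_{S_t}$ satisfies~(Z) by construction, satisfies~(S1) precisely because $S_t$ is minimal, and satisfies~(G1) --- indeed~(G2) --- since $a$ is pairwise coprime; thus $a|_{S_t} \in A(|S_t|)$. A block of size~$1$ is impossible (nonzero entries); a block of size~$2$ forces $a_i = -a_j$, hence $|a_i| = 1$ by coprimality; and a block of size $m \ge 3$ cannot consist entirely of units, because for odd $m$ the sum of $m$ units is odd, and for even $m$ a balanced family of units contains a vanishing pair, contradicting minimality. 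Consequently $\rad\big(\prod_{i \in S_t} a_i\big) \ge 2$ for every block of size $\ge 3$.

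Now let $S^{*}$ be a block containing an index where $\max_i |a_i|$ is attained. If $|S^{*}| = 2$ then $\max_i |a_i| = 1$, the whole tuple consists of units, and $q(a)$ is $0$ (or degenerate, contributing nothing to the $\limsup$). Otherwise $|S^{*}| \in \{3, \dots, n\}$; the maximum over $S^{*}$ equals the global maximum, while $\rad\big(\prod_{i=1}^n a_i\big) \ge \rad\big(\prod_{i \in S^{*}} a_i\big) \ge 2$, so $q(a) \le q(a|_{S^{*}})$. For a sequence $(a^{(k)})$ in $B(n)$, the block size $|S^{*(k)}|$ takes only finitely many values, so some $m \in \{3, \dots, n\}$ recurs infinitely often; along that subsequence the restrictions form a sequence in $A(m)$, whence $\limsup_k q(a^{(k)}) \le \max_{3 \le m \le n} Q_{A(m)}$, which Conjecture~\ref{conj:bb} evaluates to $2n-5$. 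This yields $Q_{B(n)} \le 2n-5 < \infty$.

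The genuine obstacle is that the reduction terminates at the $n$-conjecture, which is itself open; already its instance $n = 3$ is the $abc$-conjecture, and the strongest unconditional inputs (Stewart--Yu-type bounds via Baker's method on linear forms in logarithms) are only sub-exponential in $\rad(\prod a_i)$ and hence yield no bound on the quality at all. An unconditional proof of $Q_{B(n)} < \infty$ therefore seems inaccessible with current techniques, and the realistic contribution --- in the spirit of the present paper --- is to pin down how large $Q_{B(n)}$, and its analogues over $\Zi$ and the Hurwitz quaternions, is from below rather than to prove finiteness from above.
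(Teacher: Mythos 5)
You are right that the paper does not (and cannot) prove this statement: Conjecture~\ref{conj:bro} is recorded as an open problem attributed to Browkin, and the only thing the paper establishes about $B(n)$ is the lower-bound comparison $Q_{U(F,n)} \le Q_{R(n)} \le Q_{B(n)}$ of Fact~\ref{compare_fact}. So there is no proof to compare against, and the right measure of your proposal is whether your conditional reduction is sound. It essentially is: the decomposition of an element of $B(n)$ into minimal vanishing blocks, the observations that each block of size at least $3$ satisfies (Z), (S1), (G2) (hence lies in $A(m)$) and must contain an entry of absolute value at least $2$, and the inequality $q(a) \le q(a|_{S^{*}})$ (same numerator, larger denominator since $\rad$ of a divisor divides $\rad$ of the product) are all correct, and they do give $Q_{B(n)} \le \max_{3\le m\le n} Q_{A(m)} = 2n-5$ under Conjecture~\ref{conj:bb}. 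This is genuinely different from, and complementary to, what the paper does, which only transports \emph{lower} bounds upward to $B(n)$.

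Two caveats, both repairable. First, the all-unit tuples (your case $|S^{*}|=2$) have $\rad(\prod_i a_i)=1$, so their quality is $0/0$ rather than $0$; the correct dismissal is that there are at most $2^{n}$ such tuples, so they occur only finitely often in any injective enumeration of $B(n)$ and cannot affect the limit superior. Second, and more substantively, $Q_{A(m)}$ is a \emph{limit superior} over the set $A(m)$, so it bounds the limsup of $q$ only along sequences in $A(m)$ that eventually leave every finite set; your restricted tuples $a^{(k)}|_{S^{*(k)}}$ could a priori repeat a single high-quality element of $A(m)$ infinitely often, and a single element can have quality exceeding $Q_{A(m)}$. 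You need the extra remark that $S^{*(k)}$ contains the index of the global maximum, so a fixed restriction $b$ forces every entry of $a^{(k)}$ to be bounded by $\max_i |b_i|$, whence only finitely many distinct $a^{(k)}\in B(n)$ share the restriction $b$; therefore along an injective enumeration of $B(n)$ the restrictions do leave every finite subset of $A(m)$ and the bound $\limsup_k q(a^{(k)})\le Q_{A(m)}$ is legitimate. With these two patches your conditional argument is complete; unconditionally, as you correctly state, the finiteness of $Q_{B(n)}$ remains open, already for $n=3$ where it is a weak form of the $abc$-conjecture.
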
 
The next variant of the $n$-conjecture was introduced by Ramaekers. 
\begin{conj}[Ramaekers~\cite{Ram09}]\label{conj_rae} $\,$ \\ 
	Let $n \geq3$ and let $R(n)$ be the set of $n$-tuples ${a = (a_1,\ldots,a_n)\in \Z^n}$ which satisfy the conditions~\textnormal{(Z)}, \textnormal{(S1)} and \textnormal{(G2)}. 
	Then $Q_{R(n)}=1$ for every $n$. 
\end{conj}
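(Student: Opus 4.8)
Since the case $n = 3$ of Conjecture~\ref{conj_rae} already subsumes the $abc$-conjecture --- for $n = 3$ the subsum condition~(S1) follows automatically from~(Z) together with $a_1,a_2,a_3 \neq 0$, and (G1) and (G2) coincide, so that $R(3) = A(3)$ is exactly the set of $abc$-triples --- a complete proof is beyond current reach. The plan is therefore to split $Q_{R(n)} = 1$ into the lower bound $Q_{R(n)} \geq 1$, which should be provable, and the upper bound $Q_{R(n)} \leq 1$, which I expect to remain conditional and which is the genuine obstacle.

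For $Q_{R(n)} \geq 1$ the plan is to produce, for each $n \geq 3$, an explicit infinite family in $R(n)$ whose qualities tend to~$1$. For $n = 3$ nothing new is needed: since $R(3) = A(3)$, the Browkin--Brzezi\'nski inequality quoted after Conjecture~\ref{conj:bb} already gives $Q_{R(3)} = Q_{A(3)} \geq 2\cdot 3 - 5 = 1$; concretely, the triples $(1,\, 2^{6k}-1,\, -2^{6k})$ do the job, being pairwise coprime, summing to zero, having no vanishing $\{0,1\}$-subsum, and satisfying $\rad\!\big((2^{6k}-1)\cdot 2^{6k}\big) \leq \tfrac{2}{3}\,(2^{6k}-1)$ because $9 \mid 2^{6k}-1$, so that $q(1,\, 2^{6k}-1,\, -2^{6k}) \to 1$. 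For $n > 3$ the plan is to build ``connected'' tuples: padding a short good tuple by mutually cancelling extra entries is not allowed, because any vanishing sub-block would violate~(S1), so instead one splits one large entry of the family above into $n-2$ pairwise coprime summands drawn from a fixed auxiliary set of primes, chosen so that no new vanishing $\{0,1\}$-subsum is created; the large entry still dominates, hence $q \to 1$. The one delicate point in this step is the combinatorial bookkeeping that certifies~(S1) for the padded family.

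The reverse inequality $Q_{R(n)} \leq 1$ I do not expect to prove unconditionally; the natural approach is conditional: assume a sufficiently effective form of the $abc$-conjecture and reduce the $n$-term relation to three-term ones using the subsum structure. Given $a = (a_1,\dots,a_n)\in R(n)$, condition~(S1) says that $\sum_{k\in I} a_k \neq 0$ for every proper non-empty subset $I \subsetneq \{1,\dots,n\}$, so for any such $I$ the block sums $s_1 = \sum_{k\in I} a_k$ and $s_2 = \sum_{k\notin I} a_k$ are nonzero with $s_1 + s_2 = 0$, and from this splitting one extracts a three-term relation (say among $s_1$, a single chosen $a_i$, and the sum of the remaining entries) to which the $abc$-estimate can be applied, the radicals of the pieces being controlled by $\rad(a_1\cdots a_n)$ via the pairwise coprimality~(G2). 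The main obstacle --- and the reason this is still Ramaekers' conjecture rather than a theorem --- is that no known version of this reduction is lossless: the block sums $s_1,s_2$ need not be coprime to the individual entries, so the radicals do not recombine sharply, and running the argument only yields a bound $Q_{R(n)} \leq c_n$ with some constant $c_n > 1$ (this is essentially the content of the classical $n$-conjecture estimates). Pushing $c_n$ all the way down to~$1$ for every $n$, even granting the $abc$-conjecture, is exactly what would be needed and what is currently missing.
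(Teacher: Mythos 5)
There is a fundamental problem with the goal of your attempt: Conjecture~\ref{conj_rae} is stated in the paper as a \emph{conjecture} --- the paper contains no proof of it, and in fact it refutes the statement for all $n \geq 5$. Combining Fact~\ref{compare_fact}, which gives $Q_{U(F,n)} \leq Q_{R(n)}$, with Theorem~\ref{thm:paper1}, which provides explicit families establishing $Q_{U(F,n)} \geq \nicefrac{5}{3}$ for odd $n \geq 5$ and $Q_{U(F,n)} \geq \nicefrac{5}{4}$ for $n \geq 6$, one gets $Q_{R(n)} \geq \nicefrac{5}{4} > 1$ for every $n \geq 5$; the paper says this explicitly right after Theorem~\ref{thm:paper1}. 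So the upper bound $Q_{R(n)} \leq 1$, which you candidly flag as the part you cannot prove, is not merely ``currently missing'': it is provably false for $n \geq 5$, because tuples satisfying (Z), (S2) (hence (S1)) and (G2) with qualities bounded away from $1$ exist. No reduction to the $abc$-conjecture, however lossless, can rescue it, so the conditional strategy you outline in your last paragraph is doomed from the start rather than just technically obstructed. The accurate state of affairs, as recorded in the paper, is that ``$Q_{R(3)}=1$'' is equivalent to the $abc$-conjecture, the case $n=4$ is untouched by these results, and the conjecture fails for $n \geq 5$.

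Your lower-bound half is closer to being sound but also not needed in the form you give it: for $n=3$ the identification $R(3)=A(3)$ and the family $(1,\,2^{6k}-1,\,-2^{6k})$ indeed give $Q_{R(3)} \geq 1$ (this is the classical Browkin--Brzezi\'nski bound $Q_{A(n)} \geq 2n-5$ quoted after Conjecture~\ref{conj:bb}), while for $n>3$ your ``splitting one large entry into coprime summands'' step is only a sketch whose (S1) and (G2) bookkeeping you do not carry out --- and in any case, for $n \geq 5$ the relevant lower bounds from Theorem~\ref{thm:paper1} already exceed $1$, which is precisely what kills the equality you set out to prove. In short: the statement you were asked to prove is not a theorem of the paper, and any complete ``proof'' of it would contradict Theorem~\ref{thm:paper1} together with Fact~\ref{compare_fact}.
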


In the previous article~\cite{paper1}, we proved lower bounds for the qualities of the following narrower sets of integers. 
\begin{defn}\label{def:U}  
	Let $n \geq 3$ and let ${F \subseteq \mathbb{N}}$ be a finite set, where ${\min F \geq 3}$ in case that ${F \ne \emptyset}$. 
	We let $U(F,n)$ contain all
	${a = (a_1,\ldots,a_n)\in\Z^n}$ satisfying the conditions~\textnormal{(Z)}, \textnormal{(S2)}, \textnormal{(G2)} and \textnormal{(F)} with the set $F$. 
\end{defn}
Since condition~(S2) is more demanding than the subset sum condition~(S1) due to the fact that it allows negative coefficients as well, and since (G2) implies (G1), the sets $U(F,n)$ are potentially smaller than the ones introduced before. Therefore any quality lower bound for $U(F,n)$ will immediately also establish  a lower bound for the sets $A(n)$, $R(n)$ and $B(n)$. More precisely, we recall the following fact from the precursor article~\cite{paper1}. 
\begin{fct}\label{compare_fact}
	For every ${n \in \mathbb{N}}$ and any $F$ as above we have ${Q_{U(F, n)} \leq Q_{A(n)}}$ as well as ${Q_{U(F, n)} \leq Q_{R(n)} \leq Q_{B(n)}}$.
\end{fct}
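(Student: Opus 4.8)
The plan is to observe that $U(F,n)$ is contained in each of $A(n)$, $R(n)$ and $B(n)$, that $R(n)\subseteq B(n)$, and then to invoke the monotonicity of the quality functional under set inclusion; the predicates were set up precisely so that these containments hold. (For $n<3$ none of the four sets is defined, so we tacitly assume $n\ge 3$.)

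First I would record two trivial implications among the predicates. If $a$ satisfies \textnormal{(S2)} then it satisfies \textnormal{(S1)}, since any coefficient vector $(b_1,\dots,b_n)\in\{0,1\}^n$ witnessing a violation of \textnormal{(S1)} is in particular a vector in $\{-1,0,1\}^n$ witnessing a violation of \textnormal{(S2)}. And if $a$ satisfies \textnormal{(G2)} then it satisfies \textnormal{(G1)}, because for $n\ge 2$ the number $\gcd(a_1,\dots,a_n)$ divides $\gcd(a_1,a_2)=1$. Consequently every $a\in U(F,n)$ --- which by definition satisfies \textnormal{(Z)}, \textnormal{(S2)}, \textnormal{(G2)}, \textnormal{(F)} --- in particular satisfies \textnormal{(Z)}, \textnormal{(S1)}, \textnormal{(G1)}, so $a\in A(n)$; it also satisfies \textnormal{(Z)}, \textnormal{(S1)}, \textnormal{(G2)}, so $a\in R(n)$; and every $a\in R(n)$ satisfies \textnormal{(Z)} and \textnormal{(G2)}, hence $a\in B(n)$. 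Thus $U(F,n)\subseteq A(n)$, $U(F,n)\subseteq R(n)$ and $R(n)\subseteq B(n)$.

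It then remains to note that $S\mapsto Q_S$ is monotone under inclusion: if $S\subseteq T$, then every sequence of $n$-tuples drawn from $S$ is also a sequence drawn from $T$, so the $\limsup$ of qualities defining $Q_S$ is taken over a subfamily of the sequences contributing to $Q_T$, whence $Q_S\le Q_T$. Applying this to the three inclusions above yields $Q_{U(F,n)}\le Q_{A(n)}$, $Q_{U(F,n)}\le Q_{R(n)}$ and $Q_{R(n)}\le Q_{B(n)}$, which is exactly the assertion. There is no real obstacle here: the statement is a bookkeeping consequence of the definitions, and the only point worth stating carefully is the interpretation of $Q_S$ when $S$ is a set (rather than a fixed sequence) and the resulting monotonicity.
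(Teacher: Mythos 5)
Your proposal is correct and follows essentially the same route as the paper, which justifies this fact (recalled from the precursor article) by exactly the observations you make: \textnormal{(S2)} implies \textnormal{(S1)}, \textnormal{(G2)} implies \textnormal{(G1)}, hence the set inclusions $U(F,n)\subseteq A(n)$, $U(F,n)\subseteq R(n)\subseteq B(n)$, and quality is monotone under inclusion. Your explicit handling of the predicate implications and of the monotonicity of $Q$ is just a careful write-up of that same argument, so there is nothing to add.
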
 
\noindent Furthermore, in the same article, we pointed out that each of the
statements 
\[\text{``}Q_{A(3)}=1\text{''}\qquad \text{``}Q_{B(3)}=1\text{''}\qquad \text{``}Q_{R(3)}=1\text{''}\qquad  \text{``}Q_{U(\emptyset,3)}=1\text{''}\]
is equivalent to the $abc$-conjecture. 

The following two main results of that previous work will prove relevant in the remainder of the present article.
\begin{thm}[Hölzl, Kleine, and Stephan~\cite{paper1}]\label{thm:paper1}\quad \nopagebreak
	\begin{enumerate} 
	\item[(a)] Let $F$ be such that 
	$2,5,10 \notin F$. Then ${Q_{U(F,n)} \geq \nicefrac53}$ for each odd ${n \geq 5}$. 

	\item[(b)] Let $n \geq 6$ and let $F$ be an arbitrary finite set such that ${\min F \geq 3}$ in case that ${F \ne \emptyset}$. Then
	$Q_{U(F,n)} \geq \nicefrac54$.
	\end{enumerate} 
\end{thm}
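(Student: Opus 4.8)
\emph{Proof proposal.}
Since $Q_{U(F,n)}$ is a supremum over sequences in $U(F,n)$ of their $\limsup$ of qualities, it suffices to exhibit, for each admissible $n$, one explicit infinite sequence $A=\{a^{(k)}\}_{k\in\N}\subseteq U(F,n)$ with $\lim_{k}q(a^{(k)})$ equal to the asserted value. The mechanism for pushing the quality above $1$ is to force most coordinates to be perfect $m$-th powers: if a coordinate equals $u^m$, then its radical is only $\rad(u)\le u=(u^m)^{1/m}$, so an $m$-th power of archimedean size $\asymp N$ contributes only $\asymp N^{1/m}$ to $\rad$ of the product. Concretely I would start from a polynomial identity of finite-difference type, $\sum_i c_i(x-d_i)^m=L(x)$ with $\deg L$ small (linear), and substitute $x=z^{s}+r$ where $r$ is at once a root of $L$ and one of the shifts $d_{i_0}$. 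Then the $i_0$-th summand becomes the pure power $c_{i_0}z^{ms}$ (radical $\le|c_{i_0}|\,\rad(z)$), the term $L$ becomes a scalar times $z^{s}$ (radical $\le\rad(\text{const}\cdot z)$), and each remaining summand has the shape $c_i(z^{s}-e_i)^m$, of size $\asymp z^{ms}$ but radical $\le|c_i|\,|z^{s}-e_i|\asymp z^{s}$. With $J$ such genuinely varying (``junk'') summands one gets $\log\max_i|a_i|=ms\log z+O(1)$ and $\log\rad\!\big(\prod_i a_i\big)\le Js\log z+O(\log z)$, hence $q(a^{(k)})\ge\frac{ms}{Js+1}-o(1)$, which tends to $m/J$ as $s\to\infty$. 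The two parts correspond to choosing the identity so that $m/J$ equals $\nicefrac53$ and $\nicefrac54$ respectively; for part~(b) the natural candidate is $\sum_{i=0}^{4}(-1)^i\binom4i(x-i)^5=120(x-2)$, for which after substituting $x=z^s+2$ exactly four of the five fifth-power summands remain ``junk'' (the $i=2$ one becoming $6z^{5s}$, together with the term $-120z^{s}$): exactly $n=6$ coordinates, quality $\to\nicefrac54$. Part~(a) uses an arithmetically sharper identity in which the small primes $2$ and $5$, available because $2,5,10\notin F$, are exploited both inside the coefficients and in the prescribed multiplicative shape of $z$, so that the count improves to $m/J=\nicefrac53$.

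To pass from the base case $n_0$ ($=6$ in part~(b), $=5$ in part~(a)) to larger $n$ of the prescribed parity I would repeatedly split one summand into an odd number of pairwise coprime pieces, each of size and radical $O(z^{s})$ — lower order than the dominant $z^{ms}$ term — so the limiting quality is unaffected; the parity restriction in~(a) and the bound $n\ge6$ in~(b) record exactly how many coordinates these splittings cost. Condition~(Z) then holds identically, as an instance of the underlying polynomial identity. For~(G2) and~(F) I would restrict $z$ to a suitable residue class modulo a fixed modulus $D$ assembled from $2$, the finitely many primes occurring in the $c_i$, the primes of $F$, and a handful of auxiliary primes; then for each pair of coordinates the gcd, after pulling out the fixed coefficient parts (pairwise coprime where it matters), reduces to $\gcd(z^{s}-e_i,z^{s}-e_j)\mid(e_i-e_j)$ together with $\gcd(z^{s}-e_i,\text{const})$, each of which becomes $1$ once $z$ avoids finitely many residues, and likewise each coordinate avoids every multiple of every element of $F$. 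Since infinitely many $z$ survive, the family is infinite.

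The step I expect to be the real obstacle is verifying the strong subsum condition~(S2) uniformly in $k$: no $\{-1,0,1\}$-combination of the coordinates with both a zero and a one coefficient may vanish. Here an archimedean argument should suffice: for $z$ large the dominant summand strictly exceeds the sum of the absolute values of all the others, so any vanishing combination must involve it; and because every ``large'' summand $c_i(z^{s}-e_i)^m$ agrees with $c_i z^{ms}$ up to lower-order terms, any signed partial sum of the large summands is either of size $\asymp z^{ms}$ (when some leading $z^{ms}$-coefficient survives) or of strictly smaller order, in which case the genuinely small summands cannot cancel it; hence the only vanishing $\{-1,0,1\}$-combination is $\pm$ the full relation~(Z), which has no zero coefficient and so is permitted. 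The finitely many remaining configurations, where the fixed $c_i$ and $e_i$ might conspire to cancel leading terms, are excluded by strengthening the residue conditions on $z$ already imposed for~(G2). Granting~(S2), the quality conclusion follows from the estimates above by letting $z\to\infty$ and then $s\to\infty$, giving $Q_{U(F,n)}\ge\nicefrac53$ in part~(a) and $Q_{U(F,n)}\ge\nicefrac54$ in part~(b).

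Throughout, the genuine difficulty is the tension between making $\rad(\prod_i a_i)$ small — which wants as many perfect-power summands and as few distinct primes as possible — and keeping all coordinates pairwise coprime and free of the forbidden divisors: the coefficients forced by a low-degree finite-difference identity are \emph{not} pairwise coprime, so one must either absorb them into the perfect powers (which constrains the shape of $z$, and is where $2,5,10\notin F$ earns its keep in part~(a)) or spend extra coordinates on them (which is what forces $n_0=6$ rather than something smaller in part~(b)); it is precisely this bookkeeping that pins the attainable exponents at $\nicefrac53$ and $\nicefrac54$ instead of at the unconstrained value $m/J$.
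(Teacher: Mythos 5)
This paper does not actually prove Theorem~\ref{thm:paper1}: it is quoted verbatim from the precursor article~\cite{paper1}, so there is no in-paper argument to compare yours against, and your proposal has to stand on its own. Judged that way, your general strategy (finite-difference identities whose substitution $x=z^s+r$ turns most coordinates into near-perfect powers, so that the radical of the product grows like a small power of the maximum) is indeed the standard mechanism behind such lower bounds, but the concrete instantiation you give for part~(b) fails the defining conditions of $U(F,n)$ in a way that no residue-class restriction on $z$ can repair. In the $6$-tuple coming from $\sum_{i=0}^{4}(-1)^i\binom4i(x-i)^5=120(x-2)$ with $x=z^s+2$, the coordinates $-4(z^s+1)^5$ and $-4(z^s-1)^5$ are both divisible by $4$ for every $z$, so (G2) fails identically; moreover part~(b) must cover an \emph{arbitrary} finite $F$ with $\min F\geq 3$, and for $F=\{3,5\}$ the coordinates $6z^{5s}$ and $-120z^s$ are multiples of $3$ and of $5$ no matter how $z$ is chosen, so (F) fails as well. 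Since the offending factors sit in the fixed binomial coefficients and in the constant $120$, choosing $z$ in clever congruence classes cannot help; one must either redesign the identity or merge/redistribute the coefficient primes among the coordinates, which is precisely the step you yourself call ``the genuine difficulty'' and then leave undone. Part~(a) is in even worse shape: no identity is exhibited at all, only the remark that $2$, $5$, $10$ should be ``exploited,'' so nothing is proved there.

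Two smaller points. Your (S2) sketch argues that ``the dominant summand strictly exceeds the sum of the others,'' but in your construction all five quintic terms have the same order of magnitude $z^{5s}$ (with coefficients $1,4,6,4,1$), so there is no single dominant term; one must instead analyse which signed sub-collections of leading coefficients can cancel and then rule out exact cancellation case by case, which is a finite but genuinely necessary verification you have not carried out. And the bookkeeping in your quality estimate should be stated more carefully: with four non-power quintic terms plus the two terms $6z^{5s}$ and $120z^s$, one gets $\rad\leq C\,z^{4s+2}$, so the quality tends to $\nicefrac{5s}{(4s+2)}\to\nicefrac54$ only after letting $s\to\infty$ as well — that part of your outline is fine, but it only pays off once the membership conditions (G2), (F) and (S2) are actually secured, and at present they are not.
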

\noindent In particular, Ramaekers' Conjecture~\ref{conj_rae} is wrong for any ${n \geq 5}$. 

\medskip

In the present article we will generalise the above results to a wider class of rings. For this purpose, we will define analogues of the sets $A(.)$ and $U(.,.)$ for these rings, as well as an appropriate version of $\rad$. Before making the necessary preparations that will allow us to make this fully formal, we provide an informal overview of our most important results here:

\begin{itemize}
	\item For the setting of Gaussian integers~$\Zi$, and under some assumptions about~$F$, we will show $Q_{U(\Zi,F,n)} \ge \nicefrac{10}{3}$  for every~${n \geq 4}$. Achieving this result requires two separate proofs for the cases $n=4$ and~$n \geq 5$.
	
	\item If we weaken our requirements and consider the larger set $A(\Zi,n)$ we can even achieve the stronger lower bound ${Q_{A(\Zi,n)} \geq 4n - 10}$ for $n\geq 3$. 
	
	\medskip
	
	\item For the Hurwitz integers~$\Hu$, if we investigate the case $n=3$ corresponding to the $abc$-conjecture, we can show 
	$Q_{A(\Hu,3)} \geq Q_{U(\Hu,\emptyset,3)} \geq 2$.
	\item For general~$n\geq 5$, and making some weak assumptions about~$F$, we can again show $Q_{U(\Hu,F,n)} \ge \nicefrac{10}{3}$.
	\item However, relaxing to~$A(\Hu,n)$, we can even achieve the very strong lower bound $Q_{A(\Hu,n)} \geq 8n - 20$ for $n\geq 3$.
\end{itemize}

\section{General definitions}

We begin by summarising the general properties that we require of the rings that we will be working in; our specific examples will then be the commutative ring of Gaussian integers in Section~\ref{section:Gaussian} and the non-commutative ring of Hurwitz integers in Section~\ref{section:Hamiltonian}. For more information on the notions introduced here we refer the interested reader to Cohn~\cite{cohn}, Jacobson~\cite[Chapter~3]{jacobson}, Beauregard~\cite{beauregard} and Facchini and Fassina~\cite{fassina}. 
\begin{defn} \label{def:rings} 
	Let $R$ be a (not necessarily commutative) associative ring with a unit element 1. We assume that ${1 \ne 0}$ in $R$, and that $R$ does not contain any zero-divisors; that is, $R$ is an \emph{integral domain}. 
	
	\begin{enumerate}
		\item[(a)] Two elements $a$ and $b$ of $R$ will be called \emph{associated} if $b = uav$, where $u$ and $v$ 
		are units in $R$.
		\item[(b)] 	 An element~$p \in R \setminus \{0\}$ is called \emph{irreducible} if it is a non-unit  and not a product of two non-units. 
		
		\item[(c)] Two elements $a$ and $b$ of $R$ are said to be \emph{right similar} if $R/aR \sim R/bR$ as right
		$R$-modules (\emph{left similarity} is defined analogously). If $R$ is commutative then $a$ and $b$ are right similar if and only if they are associated. 
		
		\item[(d)] A \emph{unique factorisation domain} is an integral
		domain $R$ such that every non-unit ${0 \ne z \in R}$ has a factorisation into irreducible elements and
		any two prime factorisations of a given element are isomorphic in the sense that if we have factorisations 
		\[\begin{array}{r@{\;}c@{\;}l} z & = & p_1 \cdot \ldots \cdot p_s \\ 
			& = & q_1 \cdot \ldots \cdot q_t \end{array}  \] 
		with irreducible elements ${p_1, \ldots, p_s}$ and ${q_1, \ldots, q_t}$, then ${s = t}$ and there exists a permutation $\pi$ on the set of indices $\{1,2, \ldots, s\}$ such that the irreducible elements $p_i$ and $q_{\pi(i)}$ are right similar for all $i$; here right similarity is not an arbitrary choice due to the fact below.
	\end{enumerate}

\end{defn}
\begin{fct}[{Cohn~\cite[Corollary~1]{cohn}}] Two elements in a ring without zero-divisors, as in Definition~\ref{def:rings}, are right similar if and only if they are left similar.
\end{fct}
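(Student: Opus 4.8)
The plan is to extract the left $R$-module $R/Ra$ functorially from the right $R$-module $R/aR$, so that a right-module isomorphism $R/aR \cong R/bR$ automatically yields a left-module isomorphism $R/Ra \cong R/Rb$; the reverse implication is then obtained by reading the same argument with the words ``left'' and ``right'' interchanged. Throughout I would assume $a,b \neq 0$: if one of them is $0$ the statement is a direct check (and a unit is right-, and left-, similar only to a unit), so nothing essential is lost.

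First I would record the relevant free resolution. For $a \neq 0$, left multiplication $\lambda_a \colon R \to R$, $r \mapsto ar$, is a homomorphism of right $R$-modules, and it is injective because $R$ has no zero-divisors; since its cokernel is $R/aR$, we obtain a length-one free resolution
\[ 0 \longrightarrow R \xrightarrow{\lambda_a} R \longrightarrow R/aR \longrightarrow 0 \]
of the right $R$-module $R/aR$.

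Next I would dualise by applying $\operatorname{Hom}_R(-,R)$. Regarding $R$ as an $(R,R)$-bimodule, the group $\operatorname{Hom}_R(R_R,R)$ is naturally a left $R$-module, evaluation at $1$ identifies it with ${}_R R$, and under this identification the transpose $\lambda_a^{\ast}$ becomes right multiplication $\rho_a \colon R \to R$, $c \mapsto ca$, now a homomorphism of left $R$-modules. Because $a \neq 0$ and $R$ has no zero-divisors, $\operatorname{Hom}_R(R/aR,R) = 0$, while $\operatorname{coker}(\rho_a) = R/Ra$; hence the long exact sequence of $\operatorname{Ext}^{\bullet}_R(-,R)$ gives
\[ \operatorname{Ext}^1_R(R/aR,R) \;\cong\; R/Ra \qquad \text{as left } R\text{-modules}. \]
Functoriality of $\operatorname{Ext}^1_R(-,R)$ then finishes the argument: a right-module isomorphism $R/aR \cong R/bR$ induces a left-module isomorphism $\operatorname{Ext}^1_R(R/aR,R) \cong \operatorname{Ext}^1_R(R/bR,R)$, i.e.\ $R/Ra \cong R/Rb$, so $a$ and $b$ are left similar. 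Resolving $R/Ra$ by right multiplications and dualising to left multiplications gives the converse in exactly the same way.

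The only delicate point --- and the one where I would expect a slip to occur if one is careless --- is the bookkeeping of sidedness through $\operatorname{Hom}$ and $\operatorname{Ext}$: one must verify that $\operatorname{Hom}_R(R_R,R)$ is a \emph{left} module, that evaluation at $1$ is a left-module isomorphism onto ${}_R R$, and that the transpose of left multiplication by $a$ is \emph{right} multiplication by $a$ rather than left multiplication. In a commutative ring this all collapses to a triviality, which is precisely why the Fact has content only in the non-commutative case; the remaining ingredients (injectivity of the multiplication maps, exactness of the two-term resolution, vanishing of $\operatorname{Hom}_R(R/aR,R)$) are immediate consequences of the absence of zero-divisors.
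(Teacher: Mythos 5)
Your argument is correct: the resolution $0 \to R \xrightarrow{\lambda_a} R \to R/aR \to 0$ uses the absence of zero-divisors exactly where needed, the identification of the transpose of $\lambda_a$ with $\rho_a$ under evaluation at $1$ is the right sidedness bookkeeping, and the resulting natural left-module isomorphism $\operatorname{Ext}^1_R(R/aR,R) \cong R/Ra$ together with functoriality does give ``right similar $\Rightarrow$ left similar,'' with the converse by symmetry. Be aware, though, that the paper itself offers no proof of this Fact --- it is quoted from Cohn's article --- and Cohn's route is quite different in flavour: he works element-theoretically, characterising similarity through comaximal (coprime) relations of the form $ab' = ba'$, a condition that is manifestly left-right symmetric, rather than through homological duality. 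Your transpose/$\operatorname{Ext}^1$ argument is the standard modern alternative; it is shorter once one trusts the long exact sequence, while Cohn's proof stays inside elementary ring arithmetic and yields the explicit comaximal relation, which is itself useful elsewhere in his theory. One small remark: your parenthetical treatment of the degenerate cases is a bit quick as stated, but it is in fact subsumed by your own machinery --- if $b \neq 0$ and $R/bR$ were free of rank one, then $\operatorname{Ext}^1_R(R/bR,R) \cong R/Rb$ would vanish, making $b$ left invertible and hence a unit in a domain, contradicting $R/bR \neq 0$; so $0$ is (right or left) similar only to $0$, and units only to units.
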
	
\begin{rem}
	If a unique factorisation domain $R$ is additionally assumed to be commutative 
	then it is equivalent for an element $p \in R \setminus \{0\}$ to be irreducible or to be a \emph{prime} element, that is, to satisfy that
	\begin{itemize}
		\item $p$ is not a unit, and
		\item whenever $p$ divides a product $zw$, with ${z, w \in R}$, then $p$ divides at least one of the factors $z$ and $w$.
	\end{itemize}
	Thus we may use the notions ``irreducible'' and ``prime'' element synonymously for commutative unique factorisation domains. However, in a non-commutative ring~$R$ they need not coincide: while it still holds that any prime element $p$ is also irreducible, if an irreducible element $p$ is a right divisor of a product $zw$ in $R$ one can only conclude that $p$ is similar to a divisor of either~$z$ or~$w$. 
\end{rem}
	
Unique factorisation domains are not uncommon, as the following fact shows.
\begin{fct}[{Cohn~\cite[Theorem~6.3]{cohn}}] 
Any free associative algebra over a field is a unique factorisation domain.
\end{fct}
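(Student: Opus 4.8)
The statement is Cohn's theorem that the free associative algebra $R = k\langle X\rangle$ over a field $k$ is a unique factorisation domain in the similarity sense of Definition~\ref{def:rings}(d). The plan is to route the argument through the theory of rings with a \emph{weak algorithm} — the non-commutative analogue of a division algorithm — to deduce from it that $R$ is a free ideal ring, and then to extract uniqueness of factorisations from a Jordan--Hölder argument in the lattice of principal right ideals.

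First I would equip $R$ with its degree filtration $R_0 \subseteq R_1 \subseteq R_2 \subseteq \ldots$, where $R_m$ is the $k$-linear span of the monomials in the free generators of length at most $m$. Since distinct monomials remain distinct after multiplication and length is additive on monomials, $R$ has no zero-divisors and its degree function $d$ is additive, $d(ab) = d(a) + d(b)$; in particular the units of $R$ are exactly the nonzero scalars, $d$ takes values in $\mathbb{N}$, and $d$ strictly decreases under proper one-sided division. The last point already secures atomicity: there is no infinite strictly ascending chain of principal right ideals, so every nonzero non-unit of $R$ admits \emph{some} factorisation into irreducibles.

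The substantial step is to verify the weak algorithm with respect to $d$: whenever $a_1, \ldots, a_m$ are nonzero with $d(a_1) \le \ldots \le d(a_m)$ and there is a nontrivial relation $\sum_i a_i c_i$ of degree strictly below $\max_i( d(a_i) + d(c_i) )$, then some $a_j$ with $j \le m$ lies, modulo terms of strictly smaller degree, in the right $R$-span of $a_1, \ldots, a_{j-1}$. For a free algebra this reduces to linear algebra on leading homogeneous components: a genuine cancellation among the leading terms of the products $a_i c_i$ forces a $k$-linear dependence among the leading monomials of the $a_i$, and such a dependence can be lifted so as to lower the degree of one of the $a_j$. This bookkeeping with leading terms is the technical heart of the argument, and where essentially all of the work lies; it is precisely Cohn's statement that $k\langle X\rangle$ is a ring with a (two-sided) weak algorithm.

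Granting the weak algorithm, I would invoke the structural consequence that a filtered ring with a weak algorithm is a fir, and in particular a $2$-fir on both sides: for any $a, b \in R$ the ideals $aR + bR$ and $aR \cap bR$ are again principal right ideals. Hence, for a fixed nonzero non-unit $z$, the principal right ideals between $zR$ and $R$ form a sublattice which is modular and, by the degree bound, of finite length; a factorisation $z = p_1 \cdots p_s$ into atoms is the same datum as a maximal chain $zR = p_1 \cdots p_s R \subset p_1 \cdots p_{s-1} R \subset \ldots \subset p_1 R \subset R$, in which each $p_i$ is determined up to a unit and the interval it contributes corresponds to the cyclic right module $R / p_i R$. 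The Jordan--Hölder theorem for modular lattices of finite length then yields that any two maximal chains have the same length ($s = t$) and, after a permutation $\pi$, pairwise projective intervals; translating projectivity of the interval contributed by $p_i$ into an isomorphism $R/p_i R \cong R/q_{\pi(i)} R$ gives exactly the right similarity $p_i \sim q_{\pi(i)}$ required by Definition~\ref{def:rings}(d). Combined with the atomicity noted above, this shows that $R = k\langle X\rangle$ is a unique factorisation domain. As indicated, the main obstacle is the verification of the weak algorithm; once that is in place, the passage to firs and the Jordan--Hölder step are standard lattice theory.
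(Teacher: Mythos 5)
The paper itself gives no argument for this Fact: it is quoted verbatim from Cohn, so the only thing to compare your proposal against is Cohn's own proof, and your outline is exactly that route --- weak algorithm for the degree filtration on $k\langle X\rangle$, hence a free ideal ring, atomicity from additivity of the degree, and uniqueness of factorisation via Jordan--H\"older in the modular lattice of principal right ideals containing $zR$, with projective intervals translated into isomorphisms $R/p_iR \cong R/q_{\pi(i)}R$, i.e.\ right similarity. The architecture is sound and is the standard one.

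As a proof, however, it stops short precisely at the decisive point. The verification of the weak algorithm for $k\langle X\rangle$ is not carried out but handed back to Cohn, and the mechanism you sketch for it is not correct as stated: the elements $a_1,\ldots,a_m$ have weakly increasing but in general distinct degrees, so a cancellation among the leading terms of the products $a_ic_i$ cannot force ``a $k$-linear dependence among the leading monomials of the $a_i$'' --- homogeneous elements of different degrees are never linearly dependent. What the cancellation actually yields is a homogeneous relation $\sum_i \bar a_i\bar c_i=0$ among leading parts, and the claim to be proved is that the leading part of the top-degree $a_j$ lies in the right span of $\bar a_1,\ldots,\bar a_{j-1}$ with homogeneous cofactors; establishing this uses the combinatorics of the free monoid (the unique splitting of a word of length $d(a_i)+d(c_i)$ into a prefix of length $d(a_i)$ and its complementary suffix), and that is the substance of the theorem, not bookkeeping that can be waved through. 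A second, more minor, slip: in a $2$-fir it is false that $aR+bR$ and $aR\cap bR$ are principal for arbitrary $a,b$ (in $k\langle x,y\rangle$ the right ideal $xR+yR$ is free of rank $2$); the correct statement, and the only one you use, is that both are principal when $aR\cap bR\neq 0$, which holds automatically for principal right ideals containing a fixed nonzero $zR$. With that correction the lattice-theoretic half of your argument is fine, but the weak-algorithm step needs to be proved, not cited, for the proposal to count as a proof of the Fact rather than an outline of Cohn's.
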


Now we restrict to a smaller class of rings. 
\begin{defn}\label{def:rings2} 
	Let $R$ be a unique factorisation domain and a finitely generated free (left) $\Z$-module. We say that $R$ \emph{is equipped with a norm function} if there is a map~$N \colon R \rightarrow \Z$ such that \begin{itemize}
		\item \Wdiff{$N(z)$}{$N(zw)$}$N(z) \ge 0$ for each ${z \in R}$, 
		\item \Wdiff{$N(z)$}{$N(zw)$}$N(z) = 0$ if and only if ${z = 0}$, 
		\item $N(zw) = N(z) \cdot N(w)$ for all $z,w \in R$. 
	\end{itemize} 
\end{defn}

With this we are ready to introduce the quantities that will be the main object of our investigations. 
\begin{defn} \label{def:quality_neu} 
	Suppose that $R$ 
satisfies the hypotheses from Definition~\ref{def:rings2}. Let ${a = (a_1, \ldots, a_n) \in R^n}$ be such that ${a_i \ne 0}$ for each $i$. Then we define its \emph{quality}~as 
	$$ q_R(a) = \frac{\log(\max(N(a_1), \ldots, N(a_n)))}{\log(\rad(N(a_1 \cdot \ldots \cdot a_n)))}. $$ 
	In this article, the underlying ring $R$ will usually be clear from context and we will thus often write~$q$ instead of~$q_R$ to lighten notation.
	
	For a sequence of $n$-tuples ${A = \{a^{(1)}, a^{(2)},\ldots\}  \subseteq R^n}$, ${n \ge 3}$, the {\em quality}
	of $A$ is defined as
	$$
	Q_{R,A} = \limsup_{k \rightarrow \infty} q_R(a^{(k)}). 
	$$ 
	Again, when the choice of~$R$ is clear, we will often just write $Q_A$.
\end{defn}
\begin{rem}
	The above definition of quality coincides with the one given in Definition~\ref{def:quality_klassisch} if ${R = \Z}$ and if we put ${N(z) = |z|}$ for any rational integer $z$. Note that in our applications in the next sections we will consider integral extensions $R$ of $\Z$ that can be embedded in a natural way into some field extension of the field~$\R$ of real numbers. In particular these ring extensions of $\Z$ come with a natural absolute value function. 
	
	However, the norm map $N$ which we will use will \emph{not} be the absolute value function $|.|$ for these proper extensions $R$ of $\Z$, in fact we will have ${|z| = \sqrt{N(z)}}$ for each ${z \in R}$. This is necessary because in Definition~\ref{def:quality_neu} integer values are required in the denominator, and was first proposed by Elkies~\cite{elkies} and elaborated on by Browkin~\cite[Subsections~3.1--3.2]{Bro00}. Note however that we opted for a different, and arguably simpler, approach to generalize the definition of quality to the setting of more general rings than was used in these two contributions.
\end{rem}

We aim to study the quality of the following types of sets. 
\begin{defn} \label{def:U_allg} 
	Let $n \geq 3$ and let ${F \subseteq \mathbb{N}}$ be a finite set, where ${\min F \geq 3}$ in case that ${F \ne \emptyset}$. Suppose that $R$ is a unique factorisation domain which satisfies the hypotheses from Definition~\ref{def:rings2}. Let ${a = (a_1, \ldots, a_n) \in R^n}$ be such that ${a_i \ne 0}$ for each~$i$. We say that ${a \in A(R,n)}$ if $a$ satisfies the following conditions: 
	\begin{enumerate}[leftmargin=3em]
		\item[(Z)] $a_1+\ldots+a_n=0$,
		\item[(S1)] there are no $b_1,\ldots,b_n \in \{0,1\}$ and $i,j$ with
		$1\leq i,j\leq n$ such that $b_i=0$ and $b_j=1$ and
		$\sum_{k=1}^n b_k \cdot a_k = 0$, 
		\item[(G1)] both the left g.c.d.\ and the right g.c.d.\ of $a_1, \ldots, a_n$ equal $1$.
	\end{enumerate} 
	
	\smallskip
	
	\noindent Similarly, we say that ${a \in U(R,F,n)}$ if the following conditions are satisfied: 
	\begin{enumerate}[leftmargin=3em]
		\item[(Z)] $a_1+\ldots+a_n=0$,
		\item[(S2)] there are no $b_1,\ldots,b_n \in \{-1,0,1\}$ and $i,j$ with
		$1\leq i,j\leq n$ such that $b_i=0$ and $b_j=1$ and
		$\sum_{k=1}^n b_k \cdot a_k = 0$, 
		\item[(G2)] for all $i,j$ with ${1 \leq i < j \leq n}$ we have that both the left g.c.d.\ and the right g.c.d.\ of $a_i$ and $a_j$ equal $1$, 
		\item[(F)] none of $a_1,\ldots,a_n$ is a multiple of any element of $F$.
	\end{enumerate}
	
\end{defn}

Then the meta-question we are interested in is the following.
\begin{quest}
	What can be said about $Q_{U(R, F,n)}$ and $Q_{A(R,n)}$ for various $R$, $F$, and $n$? 
\end{quest}

Condition~(F) with a set  ${F \subseteq \mathbb{N}}$ might not seem particularly meaningful in the larger integral domains studied in this article. Indeed, we will only truly use it in results that make a connection with results from the previous article~\cite{paper1} where the setting was~$\Z$. In all other cases we will have $F=\emptyset$.

\begin{rem}\label{rem:fact} 
	It follows directly from the definitions that ${U(R,F,n) \subseteq A(R,n)}$ for any valid choice of $R$, $F$ and $n$. In particular, we have that 
	\[ Q_{U(R,F,n)} \leq Q_{A(R,n)}. \]
\end{rem}
\medskip

We state an auxiliary result here that holds for any ring $R$ as above, and which will be used in the remainder of the article. \goodbreak
\begin{prop}\label{prop:radical}
	\quad\nopagebreak\begin{enumerate} 
		\item[(a)] Let $n \in \N$ and assume that some factorisation $n=n_1\cdot \ldots \cdot n_\ell$ is given that may consist of prime and non-prime elements of $\N$. Then 
		\[\rad(n) \leq \rad(n_1) \cdot \ldots \cdot \rad(n_\ell) .\]
		\item[(b)] Let $R$ be as in Definition~\ref{def:rings2}, let $z \in R$, and assume that some factorisation $z=z_1\cdot \ldots \cdot z_\ell$ is given that may consist of prime elements, non-prime elements and units of $R$. Then 
		\[\rad(N(z))\leq\rad(N(z_1)) \cdot \ldots \cdot \rad(N(z_\ell)).\]
	\end{enumerate} 
\end{prop}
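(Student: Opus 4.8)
The plan is to reduce part (b) to part (a) using the multiplicativity of the norm function~$N$, and to prove part (a) directly from the definition of the radical via prime factorisations. For part (a), write each~$n_i$ as a product of primes, say $n_i = \prod_p p^{e_{i,p}}$, so that $n = \prod_p p^{\sum_i e_{i,p}}$. By definition $\rad(n) = \prod_{p \mid n} p$, the product being over all primes dividing~$n$. If a prime~$p$ divides~$n$, then (since $\N$ has unique factorisation and no zero-divisors) $p$ divides at least one of the factors~$n_i$; hence $p$ appears in the product $\rad(n_1) \cdot \ldots \cdot \rad(n_\ell)$ at least once. Therefore every prime factor of~$\rad(n)$ divides the right-hand side, and since $\rad(n)$ is squarefree this yields $\rad(n) \mid \rad(n_1) \cdot \ldots \cdot \rad(n_\ell)$, which in particular gives the claimed inequality. (The inequality can of course be strict, because a prime may divide several of the~$n_i$ and thus be overcounted on the right.)

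For part (b), apply the multiplicativity of~$N$ from Definition~\ref{def:rings2}: from $z = z_1 \cdot \ldots \cdot z_\ell$ we obtain $N(z) = N(z_1) \cdot \ldots \cdot N(z_\ell)$ in~$\Z$. If some~$z_i$ is a unit, then $N(z_i)$ is a unit in~$\Z$, i.e.\ $N(z_i) = 1$ (using $N(z_i) \ge 0$ and $N(z_i) \cdot N(z_i^{-1}) = N(1) = 1$), so $\rad(N(z_i)) = 1$ and such factors contribute trivially to the right-hand side; all the $N(z_i)$ are non-zero natural numbers because $z_i \ne 0$. Now $N(z) = N(z_1) \cdot \ldots \cdot N(z_\ell)$ is a factorisation of the natural number~$N(z)$ into natural numbers, so part (a) applies verbatim and gives $\rad(N(z)) \le \rad(N(z_1)) \cdot \ldots \cdot \rad(N(z_\ell))$, as desired.

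I do not expect any serious obstacle here; the statement is essentially a bookkeeping lemma. The only point that needs a little care is the handling of units in part (b) — one must observe that they have norm~$1$ and hence do not affect the radical — and the observation that part (a) does not require the~$n_i$ to be prime, only to be a genuine factorisation in~$\N$, which is exactly what makes it directly applicable to the norm factorisation coming from~$R$. Everything else is an immediate consequence of unique factorisation in~$\Z$ together with the definition $\rad(m) = \prod_{p \mid m} p$.
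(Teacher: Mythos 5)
Your proposal is correct and follows essentially the same route as the paper: part (a) from the observation that every prime factor of $n$ divides some $n_i$, and part (b) by multiplicativity of $N$ reducing to part (a); your extra remark that units have norm $1$ is fine but not needed, since part (a) applies to any factorisation in $\N$ regardless.
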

\begin{proof}\quad\nopagebreak\begin{enumerate} 
		\item[(a)] As every prime factor of $n$ needs to be a prime factor of at least one of the~$n_i$'s, this follows immediately from the definition of $\rad$.	
		
		\item[(b)] Using the multiplicativity of $N$ together with~(a),
		\[\begin{array}{r@{\;}c@{\;}l}
			\rad(N(z))&=&\rad(N(z_1\cdot \ldots \cdot z_\ell))\\
			&=&\rad(N(z_1) \cdot \ldots \cdot N(z_1)) \\
			&\leq& \rad(N(z_1)) \cdot \ldots \cdot \rad(N(z_1)),
		\end{array}\]
		as was to be shown.\qedhere
	\end{enumerate}
\end{proof}

\section{Gaussian integers} \label{section:Gaussian} 
The imaginary quadratic field $K = \mathbb{Q}(\imagi ) = \{ a + b \imagi  \mid a,b \in \mathbb{Q}\}$ generated by adjoining to the field of rational numbers the complex fourth root of unity ${\imagi  = \sqrt{-1}}$ is called the field of \emph{Gaussian numbers}. 
The ring of integers of $K$ given by the set ${\Zi = \{a + b i \mid a,b \in \Z\}}$ is referred to as the \emph{Gaussian integers}, and it will be the playground for the current section. 
\begin{defn}
	For any ${z = a + b \imagi  \in \Zi}$ we define its \emph{norm} to be ${N(z) = a^2 + b^2}$. The \emph{absolute value} of $z$ is given by ${|z| = \sqrt{N(z)} = \sqrt{a^2+b^2}}$. 
\end{defn}
We note that ${\Zi \subseteq \mathbb{Q}(\imagi )}$ can be naturally embedded into the field $\mathbb{C}$ of complex numbers; then the above definitions of the norm and the absolute value coincide with the usual complex norm and  absolute value. We recall some basic properties of the ring~$\Zi$; for the proofs we refer the reader to Ireland and Rosen~\cite[Chapter~4,~§4]{IR90} as well as to Ribenboim~\cite[Section~5.3]{Rib01}. 
\goodbreak
\begin{prop} \label{prop:gauss} \quad \nopagebreak
	\begin{enumerate}
		\item[(a)] The ring $\Zi$ is a {\em Euclidean domain} with respect to the norm map, that is, for $a, b \in \Zi$ with ${b \ne 0}$ there exist ${q, r \in \Zi}$ such that $a = qb + r$ and $N(r) < N(b)$. This implies that it is a unique factorisation domain and a principal ideal domain. 
		\item[(b)] An element ${z \in \Zi}$ is a unit if and only if ${N(z) = 1}$. 
		\item[(c)] The set of units of $\Zi$ is given by ${\Zi^\times = \{ \pm1, \pm \imagi \}}$. 
		\item[(d)] The norm map is multiplicative on $\Zi$, that is,  for all ${z, w \in \Zi}$,
		\[{N(z \cdot w) = N(z) \cdot N(w)}.\]
		\item[(e)] Any rational prime number $q \equiv 3 \mod{4}$ is also prime in $\Zi$. 
		\item[(f)] The rational prime numbers $p \equiv1 \mod{4}$ split into two different prime elements of $\Zi$. More precisely, each such~$p$ can be written as ${p = x^2 + y^2}$ with suitable integers $x$ and $y$ (see, for instance, Bundschuh~\cite[Satz~4.1.1]{bundschuh}). Then ${p = (x+\imagi y) \cdot (x-\imagi y)}$, where both factors are prime elements of norm $p$. 
		\item[(g)] The rational prime number 2 splits as $2 = \imagi  \cdot (1-\imagi )^2 = (-\imagi ) \cdot (1+\imagi )^2$; the two elements $1+\imagi $ and $1-\imagi $ are prime and associated via multiplication by the unit~$\imagi $. We have ${N(1 \pm \imagi ) = 2}$. 
		\item[(h)] Every prime element of $\Zi$ is associated with a prime element that is of one of the three types discussed in items~$(e)$, $(f)$ and~$(g)$.
	\end{enumerate} 
\end{prop}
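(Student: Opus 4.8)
The plan is to verify the eight items in turn; these are standard facts about $\Zi$, so rather than one obstacle we face a sequence of short classical arguments, and I would organise them in a dependency-respecting order. I would begin with part~(d): writing $\bar z = a - b\imagi$ for $z = a + b\imagi$, complex conjugation is a ring automorphism of $\Zi$ with $N(z) = z\bar z$, so $N(zw) = zw\,\overline{zw} = (z\bar z)(w\bar w) = N(z)N(w)$. From this, part~(b) is immediate: $zw = 1$ forces $N(z)N(w) = 1$ with both factors nonnegative integers, hence $N(z) = 1$; conversely $N(z) = 1$ gives $z\bar z = 1$, so $\bar z$ is an inverse. Part~(c) follows by solving $a^2 + b^2 = 1$ over $\Z$, whose only solutions are $(\pm 1, 0)$ and $(0, \pm 1)$.

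For part~(a) I would exhibit the division algorithm directly: given $a, b \in \Zi$ with $b \ne 0$, compute $a/b \in \mathbb{Q}(\imagi)$, round its real and imaginary parts to nearest rational integers to obtain $q \in \Zi$, and set $r = a - qb$; then $N(a/b - q) \le \tfrac14 + \tfrac14 = \tfrac12 < 1$, so by~(d) we get $N(r) = N(b)\,N(a/b - q) < N(b)$. The standard implications ``Euclidean domain $\Rightarrow$ principal ideal domain $\Rightarrow$ unique factorisation domain'' then finish~(a). Parts~(e) and~(g) are short norm computations: if $q \equiv 3 \mod{4}$ were reducible, say $q = \alpha\beta$ with $\alpha, \beta$ non-units, then $q^2 = N(\alpha)N(\beta)$ with both norms $> 1$ would force $N(\alpha) = q = a^2 + b^2$, impossible since a sum of two squares is $\equiv 0, 1, 2 \mod{4}$; hence $q$ is irreducible, thus prime in the UFD $\Zi$. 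For~(g) one checks $(1-\imagi)^2 = -2\imagi$, so $\imagi(1-\imagi)^2 = 2$, and $1+\imagi = \imagi(1-\imagi)$ shows the two primes are associated, while $N(1 \pm \imagi) = 2$ is a rational prime, so $1 \pm \imagi$ is irreducible.

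The one item carrying genuine arithmetic content is~(f), which rests on Fermat's two-squares theorem; I would include its Gaussian-integer proof: for $p \equiv 1 \mod{4}$ the element $-1$ is a quadratic residue mod $p$, so $p \mid m^2 + 1 = (m+\imagi)(m-\imagi)$ for some $m \in \Z$, yet $p$ divides neither factor in $\Zi$, so $p$ is not prime there; writing $p = \alpha\beta$ with non-units gives $N(\alpha) = N(\beta) = p$, hence $p = x^2 + y^2$ and $p = (x+\imagi y)(x-\imagi y)$, each factor of prime norm $p$ and therefore irreducible, and non-associated because $x + \imagi y = u(x - \imagi y)$ for a unit $u$ would force $xy = 0$ or $x = \pm y$, contradicting that $p$ is an odd prime. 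Finally, for~(h), let $\pi$ be any prime of $\Zi$; since $\pi \mid N(\pi) = \pi\bar\pi$ and $N(\pi)$ factors into rational primes in $\Z$, primality of $\pi$ yields $\pi \mid p$ for some rational prime $p$, whence $N(\pi) \mid N(p) = p^2$, so $N(\pi) \in \{p, p^2\}$; if $N(\pi) = p$ then $p = \pi\bar\pi$ is a nontrivial factorisation, placing $\pi$ up to associates among the primes of~(f) or~(g), while if $N(\pi) = p^2$ then $p/\pi$ is a unit, so $\pi$ is associated to $p$, which is consistent only with $p$ prime in $\Zi$, i.e.\ case~(e). I expect no real obstacle here: the heaviest input, that $-1$ is a square modulo a prime $\equiv 1 \mod{4}$, is elementary, so the work lies purely in arranging these classical arguments and in the counting modulo~$4$.
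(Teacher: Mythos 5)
Your proof is correct: these are the standard classical arguments, and the paper itself offers no proof at all, simply citing Ireland--Rosen and Ribenboim, whose treatments follow essentially the same line you take (multiplicativity of the norm via conjugation, the rounding division algorithm, norm computations for $q\equiv 3\pmod 4$ and for $1\pm\imagi$, Fermat's two-squares argument via $-1$ being a quadratic residue, and the classification of primes by dividing a rational prime). The only point stated slightly tersely is in~(h), where in the case $N(\pi)=p$ you should note explicitly that $p\equiv 3\pmod 4$ is excluded because $p$ would be a sum of two squares; this is immediate and does not constitute a gap.
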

Our first attempt to obtain results about~$Q_{U(\Zi,F,n)}$ and $Q_{A(R,n)}$ for various~$F$ and~$n$  is by reusing the results for rational integers, and in particular Theorem~\ref{thm:paper1}. To this end we compare the qualities~$q_{\Zi}(a)$ and~$q_\Z(a)$ of an integer $n$-tuple~${a \in \Z^n}$. 
\begin{prop} \label{prop:compare_Gauss} 
	Let $a = (a_1, \ldots, a_n) \in \Z^n$ be such that ${a_1, \ldots, a_n \ne 0}$. Then 
	\[ q_{\Zi}(a) = 2 \cdot q_\Z(a). \]
\end{prop}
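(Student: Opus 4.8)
The plan is to compare the numerators and the denominators of the two quality expressions separately, using the single observation that when a rational integer $a_i$ is viewed as the element $a_i + 0\cdot\imagi$ of $\Zi$, its Gaussian norm is $N(a_i) = a_i^2 + 0^2 = a_i^2$.

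First I would treat the numerators. Since $N(a_i) = a_i^2$ for each $i$, we have
$\max(N(a_1),\ldots,N(a_n)) = \max(a_1^2,\ldots,a_n^2) = \bigl(\max(|a_1|,\ldots,|a_n|)\bigr)^2$,
so the numerator $\log\bigl(\max(N(a_1),\ldots,N(a_n))\bigr)$ of $q_{\Zi}(a)$ equals $2\log\bigl(\max(|a_1|,\ldots,|a_n|)\bigr)$, which is exactly twice the numerator of $q_\Z(a)$.

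Next I would treat the denominators. By multiplicativity of $N$ (Proposition~\ref{prop:gauss}(d)), or simply by direct computation, $N(a_1\cdots a_n) = (a_1\cdots a_n)^2$. The key elementary fact is that $\rad(m^2) = \rad(m)$ for every nonzero $m \in \Z$: by Definition~\ref{def:quality_klassisch}, $\rad$ of an integer is the product of the distinct rational primes dividing it, and $m$ and $m^2$ share exactly the same set of prime divisors. Applying this with $m = a_1\cdots a_n$ gives $\rad\bigl(N(a_1\cdots a_n)\bigr) = \rad\bigl((a_1\cdots a_n)^2\bigr) = \rad(a_1\cdots a_n)$, so the denominator of $q_{\Zi}(a)$ coincides with the denominator of $q_\Z(a)$.

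Combining the two computations yields $q_{\Zi}(a) = 2\,q_\Z(a)$. There is no genuine obstacle; the only point that deserves emphasis is that the norm $N$ on $\Zi$ restricts on $\Z$ to the \emph{square} of the absolute value rather than to the absolute value itself — the phenomenon $|z| = \sqrt{N(z)}$ discussed in the remark following Definition~\ref{def:quality_neu} — and it is precisely this that produces the multiplicative factor $2$, since it doubles the numerator while leaving the radical in the denominator unchanged.
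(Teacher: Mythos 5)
Your proposal is correct and follows essentially the same route as the paper: both arguments reduce to the observation that $N(a_i)=a_i^2$ on rational integers, which doubles the numerator, while the radical in the denominator is unchanged since squaring does not alter the set of prime divisors. The only cosmetic difference is that the paper writes $\rad(a_1^2\cdots a_n^2)=\rad(a_1\cdots a_n)$ in one step inside a chain of equalities, whereas you isolate the fact $\rad(m^2)=\rad(m)$ explicitly, which changes nothing of substance.
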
 
\begin{proof}
	The quality functions $q_\Z$ and $q_{\Zi}$ only differ with respect to the application of the norm function. For any ${a_i \in \Z \subseteq \Zi}$ we have ${N(a_i) = a_i^2}$. Thus, 
	\[\begin{array}{r@{\;}c@{\;}l}
		q_{\Zi}(a) & = & \displaystyle\frac{\log(\max( N(a_1), \ldots, N(a_n)))}{\log(\rad(N(a_1) \cdot \ldots \cdot N(a_n))))} \\[1em] 
		& = & \displaystyle\frac{\log(\max( a_1^2, \ldots, a_n^2))}{\log(\rad(a_1^2 \cdot \ldots \cdot a_n^2)))} \\[1em]  
		& = & \displaystyle\frac{2 \cdot \log(\max( |a_1|, \ldots, |a_n|))}{\log(\rad(a_1 \cdot \ldots \cdot a_n)))} \\[1em] 
		&=& 2 \cdot q_\Z(a), 
	\end{array}\]
	as needed.
\end{proof}
\begin{cor} \label{cor:compare_Gauss} 
	Let ${n \ge 3}$ and let ${F \subseteq \N}$ be a finite set, where ${\min F \geq 3}$ in case that ${F \ne \emptyset}$. Then 
	\[\begin{array}{l@{\;}c@{\;}l@{\;}c@{\;}l}
		Q_{\Zi, U(\Zi,F,n)} &\geq& 2 \cdot Q_{\Z, U(\Zi,F,n) \cap \Z^n} &=& 2 \cdot Q_{\Z, U(\Z,F,n)},\\
		Q_{\Zi, A(\Zi,n)} &\geq& 2 \cdot Q_{\Z, A(\Zi,n) \cap \Z^n} &=& 2 \cdot Q_{\Z, A(\Z,n)}.
	\end{array}\]
\end{cor}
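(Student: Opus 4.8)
The plan is to derive the corollary from Proposition~\ref{prop:compare_Gauss} together with the monotonicity of the quality functional under inclusion of the underlying set, after first recording two elementary set identities.

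\emph{Step 1: the set identities.} I would first show
\[ U(\Zi,F,n)\cap\Z^n = U(\Z,F,n) \qquad\text{and}\qquad A(\Zi,n)\cap\Z^n = A(\Z,n). \]
Conditions~(Z), (S1) and~(S2) pose no difficulty: (Z) is literally the same equation in $\Z$ and in $\Zi$, while (S1) and~(S2) involve only coefficients $b_k \in \{-1,0,1\} \subseteq \Z$, so for a tuple $a \in \Z^n$ the subsum $\sum_k b_k a_k$ vanishes in $\Z$ precisely when it vanishes in $\Zi$. For the gcd conditions~(G1) and~(G2) I would use that $\Zi$ is a principal ideal domain (Proposition~\ref{prop:gauss}(a)): a rational integer $d > 1$ dividing integers $a_{i_1},\dots,a_{i_m}$ stays a non-unit of $\Zi$ dividing all of them, since $N(d) = d^2 > 1$ rules out $d$ being a unit by Proposition~\ref{prop:gauss}(b); conversely, a Bézout relation $\sum_k x_k a_{i_k} = 1$ over $\Z$ is also one over $\Zi$. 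Hence the $\Z$-gcd of such integers equals $1$ if and only if their $\Zi$-gcd is a unit. Finally~(F) transfers because $\Zi \cap \mathbb{Q} = \Z$: for $f \in F \subseteq \N$ and $a_k \in \Z$, $f$ divides $a_k$ in $\Zi$ iff $a_k/f \in \Z$ iff $f$ divides $a_k$ in $\Z$.

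\emph{Step 2: conclusion.} Each $a$ in $U(\Zi,F,n)\cap\Z^n$ or in $A(\Zi,n)\cap\Z^n$ lies in $\Z^n$ with all coordinates nonzero, so Proposition~\ref{prop:compare_Gauss} gives $q_{\Zi}(a) = 2\, q_\Z(a)$. Applying this termwise along an enumeration of such a set $S$ and passing to the limit superior yields $Q_{\Zi,S} = 2\, Q_{\Z,S}$; combined with the identities of Step~1 this is exactly the equality part of the corollary. For the inequality part, note $U(\Zi,F,n)\cap\Z^n \subseteq U(\Zi,F,n)$ and $A(\Zi,n)\cap\Z^n \subseteq A(\Zi,n)$, so by the monotonicity of $Q$ under inclusion (as already invoked in Remark~\ref{rem:fact}) we have $Q_{\Zi,U(\Zi,F,n)} \ge Q_{\Zi,U(\Zi,F,n)\cap\Z^n}$ and likewise for $A$, which together with the equality just established finishes the argument. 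I expect the only non-routine point to be the transfer of the gcd conditions in Step~1, handled by the principal-ideal-domain property and the norm formula of Proposition~\ref{prop:gauss}; everything else is bookkeeping.
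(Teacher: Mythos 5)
Your proposal is correct and follows essentially the same route as the paper: prove the set identities $U(\Zi,F,n)\cap\Z^n = U(\Z,F,n)$ (and the analogue for $A$), apply Proposition~\ref{prop:compare_Gauss} pointwise, and invoke monotonicity of the quality under inclusion for the inequality. The only difference is a local one: you transfer the gcd conditions via a B\'ezout relation over $\Z$, whereas the paper argues through the norm of a hypothetical common Gaussian prime divisor; both justifications are sound.
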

\begin{proof}
	We will prove the first assertion; the proofs of the second are analogous. 
	
	\smallskip
	
	Let ${a = (a_1, \ldots, a_n)}$ be an element of $U(\Zi,F,n)$ all of whose entries are non-zero integers. Then 
	$q_{\Zi}(a) = 2 q_\Z(a)$
	by Proposition~\ref{prop:compare_Gauss}, which establishes the inequality in the above statement. 
	
	\smallskip
	
	Now we show that ${U(\Z,F,n) = U(\Zi,F,n) \cap \Z^n}$ which proves the remaining assertion. 
	
	First, let~${a \in U(\Z,F,n)}$. Properties~(Z) and~(S2) do not depend on the underlying ring $R$, that is, they are satisfied if we consider $a$ as an element of $\Zi^n$. To see that (G2) holds also in the larger ring $\Zi$, suppose that there are ${i, j \in \{1, \ldots, n\}}$ with ${i \ne j}$ and a prime element $p$ of $\Zi$ which divides both $a_i$ and~$a_j$. Then, by~Proposition~\ref{prop:gauss}(d), we have 
	\[ N(a_i) = N(p \cdot b_i) = N(p)\cdot N(b_i) \quad\text{and}\quad  N(a_j) = N(p \cdot b_j) = N(p) \cdot N(b_j)\]
	for suitable ${b_i, b_j \in \Zi}$. Thus both $N(a_i)$ and $N(a_j)$ are divisible by the rational integer~${q = N(p)}$, for which we must have ${q \ne 1}$ by Proposition~\ref{prop:gauss}(b). Recalling that $a_i, a_j \in \Z$, we have $N(a_i)=a_i^2$ and~$N(a_j)=a_j^2$, and thus $a_i$ and~$a_j$ are both divisible by a non-trivial rational divisor of~$q$, contradicting our assumption that they are coprime as rational integers. 	
	
Finally, suppose that an element ${f \in F}$ divides one of the $a_i$ in $\Zi$, that is, there exists an~${x \in \Zi}$ such that ${a_i = f \cdot x}$. Since $\mathbb{Q}$ is closed under inverses and under multiplication we have $x \in \Zi \cap \mathbb{Q} = \Z$, which contradicts ${a \in U(\Z,F,n)}$. Thus, condition~(F) also holds, and ${a \in  U(\Zi,F,n) \cap \Z^n}$.
	
	For the converse inclusion, suppose that ${a \in U(\Zi,F,n) \cap \Z^n}$. Then the entries of $a$ are rational integers summing to zero. As above, condition~(S2) does not depend on the underlying ring $R$. The fact that the entries $a_i$ of $a$ are pairwise coprime in the ring $\Zi$ trivially implies that they are as rational integers, as well. Similarly, condition~(F) holding over the larger ring $\Zi$ implies that it also holds over $\Z$. 
\end{proof}
\goodbreak
We are now ready to state our first result in the realm of Gaussian integers.
\begin{thm} \label{thm:G1} \quad \nopagebreak \begin{enumerate} 
	\item[(a)] For each odd ${n \geq 5}$ and any finite set $F$ which does not contain any divisors of 2, 5 or 10 we have that $Q_{U(\Zi,F,n)} \ge \nicefrac{10}{3}$. 
	\item[(b)] Let ${n \geq 6}$ and let $F$ be an arbitrary finite set such that ${\min F \geq 3}$ in case that ${F \ne \emptyset}$. Then ${Q_{U(\Zi,F,n)} \geq \nicefrac{5}{2}}$. 
	\item[(c)] For any ${n \geq 3}$ we have that ${Q_{A(\Zi,n)} \geq 
	4n - 10}$. 
	\end{enumerate} 
\end{thm}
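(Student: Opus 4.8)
The plan is to deduce all three parts directly from Corollary~\ref{cor:compare_Gauss}, which already isolates the essential point: a rational integer $n$-tuple, viewed inside $\Zi^n$, has exactly twice the quality it has over $\Z$, while the conditions defining $U(\cdot)$ and $A(\cdot)$ are preserved in both directions when passing between $\Z$ and $\Zi$ (this is the equality $U(\Z,F,n) = U(\Zi,F,n)\cap\Z^n$ established there, together with its analogue for $A(\cdot)$). So it suffices, for each of the three regimes, to feed the corollary a known lower bound on the $\Z$-quality of the corresponding set of \emph{rational} integer tuples, and then double it.

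For part~(a) I would first observe that for a finite set $F\subseteq\N$ with $\min F\geq 3$ the hypothesis ``$F$ contains no divisor of $2$, $5$ or $10$'' is equivalent to ``$5,10\notin F$'', which is precisely the hypothesis ``$2,5,10\notin F$'' appearing in Theorem~\ref{thm:paper1}(a); that theorem then supplies $Q_{U(\Z,F,n)}\geq\nicefrac53$ for odd $n\geq 5$, and Corollary~\ref{cor:compare_Gauss} upgrades this to $Q_{U(\Zi,F,n)}\geq 2\cdot\nicefrac53=\nicefrac{10}{3}$. Part~(b) is identical in form: Theorem~\ref{thm:paper1}(b) gives $Q_{U(\Z,F,n)}\geq\nicefrac54$ for $n\geq 6$ and any admissible $F$, and the corollary yields $Q_{U(\Zi,F,n)}\geq 2\cdot\nicefrac54=\nicefrac52$.

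For part~(c) the relevant $\Z$-input is instead the classical lower bound $Q_{A(n)}\geq 2n-5$ of Browkin and Brzezi\'nski, valid for all $n\geq 3$; since conditions~(Z) and~(S1) do not depend on the ring and condition~(G1) over the commutative ring $\Z$ is just the ordinary $\gcd$ condition, we have $A(\Z,n)=A(n)$, so the second line of Corollary~\ref{cor:compare_Gauss} gives $Q_{A(\Zi,n)}\geq 2\cdot Q_{A(\Z,n)}\geq 2(2n-5)=4n-10$, as desired. I do not expect any genuine obstacle here: all of the real construction work already lives in Theorem~\ref{thm:paper1} and in the Browkin and Brzezi\'nski theorem, and the only step requiring care is to make the $F$-hypotheses in~(a) and~(b) line up exactly with those of the $\Z$-setting. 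It is worth keeping in mind, though, that these bounds are not expected to be sharp for $\Zi$: the stronger bound $Q_{U(\Zi,F,n)}\geq\nicefrac{10}{3}$ for all $n\geq 4$, and in particular the case $n=4$, will require genuinely new constructions carried out separately later.
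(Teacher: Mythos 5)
Your proposal is correct and follows essentially the same route as the paper: combine Corollary~\ref{cor:compare_Gauss} with Theorem~\ref{thm:paper1}(a),(b) for parts~(a) and~(b), and with the Browkin--Brzezi\'nski bound $Q_{A(n)}\geq 2n-5$ for part~(c), doubling each $\Z$-bound. Your extra remarks (that the $F$-hypothesis in~(a) matches that of Theorem~\ref{thm:paper1}(a) given $\min F\geq 3$, and that $A(\Z,n)$ coincides with the classical $A(n)$) are accurate and only make explicit what the paper leaves implicit.
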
 
\begin{proof}
	Parts~(a) and (b) follow by combining Theorem~\ref{thm:paper1} from the previous article~\cite{paper1} with Corollary~\ref{cor:compare_Gauss}. The last assertion follows from a result of Browkin and Brzeziński \cite[Theorem~1]{BB94} and Corollary~\ref{cor:compare_Gauss}. 
\end{proof}

The following result for the case $n=4$ is inspired by an example in an article of Darmon and Granville~\cite[item (e) on p.~542]{DG95} which they credit to private communication with Noam D.\ Elkies.
\goodbreak
\begin{thm} \label{th:elkies}
	$Q_{A(\Zi,4)} 
	\geq Q_{U(\Zi,\emptyset,4)} 
	\geq \nicefrac{10}{3}$.
\end{thm}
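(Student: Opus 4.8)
The first inequality $Q_{A(\Zi,4)} \ge Q_{U(\Zi,\emptyset,4)}$ is immediate from Remark~\ref{rem:fact}, so everything reduces to proving $Q_{U(\Zi,\emptyset,4)} \ge \nicefrac{10}{3}$. The plan is to exhibit an explicit one-parameter family of four-tuples $a(t) = (a_1(t), a_2(t), a_3(t), a_4(t)) \in \Zi^4$, adapted from the construction that Darmon and Granville attribute to Elkies, such that $a_1(t) + a_2(t) + a_3(t) + a_4(t) = 0$ holds identically, each $a_i(t)$ is given by an explicit polynomial in $t$ (so that all quantities below are polynomial functions of $t$), and $\limsup_t q_{\Zi}(a(t)) \ge \nicefrac{10}{3}$ once $t$ ranges over a suitable infinite subset of $\Z$ (or of $\Zi$).

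The reason the Gaussian setting admits $n=4$ at all --- recall that for $n=4$ we cannot reach $\nicefrac{10}{3}$ by doubling Theorem~\ref{thm:paper1}, as we did for odd $n \ge 5$ in Theorem~\ref{thm:G1}(a) --- is that $\Zi$ enjoys extra factorisations unavailable in $\Z$, notably $2 = -\imagi(1+\imagi)^2$ and the splitting $p = \pi\bar\pi$ of rational primes $p \equiv 1 \mod{4}$ (Proposition~\ref{prop:gauss}(f),(g)). These allow one to arrange that one or more of the $a_i(t)$ is, up to a unit, a high perfect power of a linear (or low-degree) Gaussian polynomial, so that the corresponding factor contributes a large norm while adding only a small prime support to $\rad(N(a_1 \cdots a_4))$.

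Granting such a family, the verification has three parts. First, condition~(Z) is a polynomial identity in $t$, checked by direct expansion. Second, condition~(G2): a Gaussian prime dividing both $a_i(t)$ and $a_j(t)$ must divide the resultant $\mathrm{Res}(a_i,a_j)$ or a leading coefficient of $a_i$ or $a_j$; since $\Zi$ is a unique factorisation domain (Proposition~\ref{prop:gauss}(a)) this is a fixed nonzero Gaussian integer, so excluding the finitely many residue classes of $t$ modulo its prime factors leaves an infinite admissible set of parameters. Third, condition~(S2): for each $(b_1,\dots,b_4) \in \{-1,0,1\}^4$ having at least one coordinate equal to $0$ and at least one coordinate equal to $1$, the combination $\sum_k b_k a_k(t)$ is a polynomial in $t$ which one checks is not identically zero, hence vanishes for at most finitely many $t$; removing those parameters as well, the surviving tuples lie in $U(\Zi,\emptyset,4)$. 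Finally one estimates: $\max_i N(a_i(t))$ grows like $|t|^{D}$ for an explicit $D$, while by Proposition~\ref{prop:radical}(b), applied to the chosen factorisations of the $a_i(t)$, one has $\rad(N(a_1(t)\cdots a_4(t))) \le C\,|t|^{R}$ with $D/R = \nicefrac{10}{3}$; hence $q_{\Zi}(a(t)) \to \nicefrac{10}{3}$ along the family.

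I expect the main obstacle to be the interplay between (S2) and (G2): the parametrisation must be loose enough that, after deleting the finitely many bad residues for the resultants and the finitely many subsum-zeros, infinitely many admissible parameters remain, yet tight enough that none of the proper signed subsums --- for four terms, up to symmetry, a pair-type combination $a_i \pm a_j$ and a triple-type combination $\pm a_i \pm a_j \pm a_k$ --- degenerates to the zero polynomial. A secondary technical point is the radical estimate itself: since $N$ is quadratic in the entries, one must keep precise track of which rational primes actually occur in $N(a_1(t)\cdots a_4(t))$, rather than relying on crude bounds, in order to pin the limit down to exactly $\nicefrac{10}{3}$.
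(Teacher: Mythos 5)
You reduce correctly to the second inequality via Remark~\ref{rem:fact}, and your verification template --- condition~(Z) as an identity, (G2) and (S2) checked outside finitely many bad parameters, then a radical estimate via Proposition~\ref{prop:radical}(b) --- has the same shape as the paper's argument. But there is a genuine gap: the family is never exhibited. Everything after ``Granting such a family'' is conditional on exactly the object whose construction \emph{is} the theorem. The paper produces it explicitly: quadratic forms $a,b,c,d$ in two integer parameters $(x,y)$, with $c$ and $d$ containing the Gaussian term $2\imagi xy$, chosen so that the quintic identity $a^5+b^5+\imagi c^5+\imagi d^5=0$ holds identically; and then --- crucially --- the parameters are restricted to the infinitely many solutions of the Pell equation $x^2-2xy-2y^2=1$. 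On this subfamily the entry $b$ collapses to a unit, $N(c)=N(d)$, and $a$ is a rational integer, so the radical of the product of the four norms is at most $\rad(a^2)\cdot\rad(N(c))\le a\cdot N(a)=a^3$, while the maximal norm is $N(a)^5=a^{10}$; that ratio is where $\nicefrac{10}{3}$ comes from.

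This also shows why the mechanism you describe would not deliver the bound even once some polynomial family were written down. If all four entries are fifth powers of quadratics of comparable size and the parameter runs over \emph{all} $t$ outside finitely many exceptions, the radical of the product of norms is generically comparable to the product of the radicals of all four norms, and the degree count yields a quality of the order of $\nicefrac{5}{4}$, not $\nicefrac{10}{3}$. Reaching $\nicefrac{10}{3}$ requires passing to a thin subfamily (the Pell conic) on which one entry degenerates to a unit and two entries share a norm; on such a set your resultant argument for (G2) and your ``a non-zero polynomial vanishes at finitely many $t$'' argument for (S2) do not apply verbatim, and the paper instead verifies both conditions by hand from the Pell relation: a common prime divisor of two entries would have to divide $x$ or $2y$ and is then excluded using $x^2-2xy-2y^2=1$, and every signed subsum is evaluated explicitly (case by case, using that only $c^5$ and $d^5$ have imaginary parts and that $b$ is a unit). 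So your skeleton is sound and you correctly identify why $n=4$ needs a construction beyond doubling the rational-integer results, but the missing explicit family and the arithmetic specialisation it requires are precisely the substance of the proof.
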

\begin{proof}
	In view of Remark~\ref{rem:fact} it suffices to prove the second inequality.
	
	\smallskip
	A simple calculation shows that, independently of~$x$ and~$y$, for
	\[
		\begin{array}{r@{\;}c@{\;}l@{}r@{\;\;}l@{}l}
			a_{x,y}&:=&\phantom{-}(&x^2&+2& \,\cdot\, x \cdot y\;-\!2y^2),\\
			b_{x,y}&:=&-(&x^2&-2& \,\cdot\, x \cdot y\;-\!2y^2),\\
			c_{x,y}&:=&\phantom{-}(&-x^2&+2\imagi & \,\cdot\, x \cdot y\;-\!2y^2),\\
			d_{x,y}&:=&-(&-x^2&-2\imagi &\,\cdot\, x \cdot y\;-\!2y^2),
		\end{array}
	\]
	we have $a_{x,y}^5 + b_{x,y}^5 +\imagi  \cdot c_{x,y}^5 + \imagi  \cdot d_{x,y}^5=0$, which establishes condition~(Z) for the quadruple~$(a_{x,y}^5,\, b_{x,y}^5,\,\imagi  \cdot c_{x,y}^5,\, \imagi  \cdot d_{x,y}^5)$. Let us write 
	\[\wha_{x,y} :=a_{x,y}^5,\quad \whb_{x,y}:=b_{x,y}^5,\quad\whc_{x,y} := \imagi  \cdot c_{x,y}^5,\quad \text{and} \quad\whd_{x,y}:= \imagi  \cdot d_{x,y}^5\]
	to simplify our notation. Now consider the Pell equation 
	\[x^2-2xy-2y^2 = (x-y)^2-3y^2=1;\tag{$\dagger$} \label{pell_fdjs}\]
	since ${(11,4)}$ is a solution and as $3$ is not a square, it has infinitely many solutions ${(x,y) \in (2\N +1) \times(\N\setminus\{0\})}$ (see, for instance, Bundschuh~\cite[Subsection~4.3.3]{bundschuh}).
	For an arbitrary but fixed such $(x,y)$ we make the following observations, omitting the subscripts to lighten notation:
	\begin{itemize}
		\item {\em We have $b=1$:} This is immediate by \eqref{pell_fdjs} and the definition of~$b$ above.
	
	\smallskip
	
		\item {\em Condition~\textnormal{(G2)} holds; in particular, $\,\wha,\whb,\whc,\whd\neq0$:}
		For a contradiction, assume that some prime element~$p$ exists that is a common factor of some distinct pair of numbers taken from $\{a,b,c,d\}$.
		Note that $a-b=4xy$, that $c-d=\imagi  \cdot 4xy$, and that, by an easy calculation,
		\[
			a \cdot b - c \cdot d = (x^4-8x^2y^2+4y^4)-(x^4+8x^2y^2+4y^4)
			= -16x^2 y^2.
		\]
		Thus $p$~would also have to be a factor of~$x$ or of~$2y$; we claim that it cannot be a factor of both. To see this, observe that any common prime factor~$p$ of~$x$ and of~$2y$ also divides ${(x-y)^2-3y^2}$, contradicting~\eqref{pell_fdjs}. 
		
		But if $p$ only divides exactly one of~$x$ or~$2y$, then it is easy to see that $p$~cannot divide any of $a$, $b$, $c$ or $d$, by inspecting the terms appearing in their definitions. This contradiction implies that no~$p$ as above can exist.
		
		\smallskip
		
		\item {\em Condition~\textnormal{(S2)} holds:} Assume that there is a non-trivial subsum 
		\[b_{\,\wha} \cdot \wha + b_{\,\whb} \cdot \whb + b_{\,\whc} \cdot \whc + b_{\whd} \cdot \whd=0\]
		with coefficients ${b_{\,\wha},\dots,b_{\, \whd} \in \{-1,0,1\}}$.
		As only~$\whc$ and~$\whd$ have an imaginary component, we clearly have ${b_{\,\whc} \ne0}$ if and only if ${b_{\,\whd} \ne 0}$. Moreover, as ${\whc - \whd}$ is not a rational number, these two coefficients must in fact be equal; let us consider all three possible cases how this could occur:
		
		\smallskip
		
		In case that ${b_{\,\whc} = b_{\,\whd}=0}$, we would have ${b_{\,\wha} \cdot \wha + b_{\,\whb} \cdot \whb=0}$. We can exclude the subcase  ${b_{\,\wha}=b_{\,\whb}=0}$, as otherwise our subsum would be trivial after all. In the subcase ${b_{\,\wha}=b_{\,\whb}=1}$ we would be left with 
		\[ \begin{array}{r@{\;}c@{\;}l} 
			\wha + \whb & = & 20 y x^9 - 96 y^5 x^5 + 320 y^9 x \\ 
			            & = & 4xy \cdot (5 x^8 + 80 y^8 - 24 x^4 y^4) \\ 
			            & = & 4xy \cdot ((2x^4 - 6y^4)^2 + x^8 + 44y^8), \end{array} \] 
		which is a product of two positive factors, and thus non-zero. Similarly, if ${b_{\,\wha}=1}$ and ${b_{\,\whb}=-1}$ then we have to consider ${\wha - \whb}$; this is non-zero  because~${a = b + 4xy > b=1}$. 
		The remaining two subcases are excluded by noticing that ${-\wha -\whb = -(\wha + \whb)}$ and ${-\wha + \whb = -(\wha - \whb)}$. 
		
		\smallskip
		
		\noindent Next, in case ${b_{\,\whc} = b_{\,\whd} = 1}$, we distinguish nine subcases. \begin{itemize} 
		\item If ${b_{\,\wha}=b_{\,\whb}=1}$ as well, then we would again have a trivial subsum. 
		\item If ${b_{\,\wha}=0}$ and ${b_{\,\whb}=1}$, then condition~(Z) would imply ${\wha=0}$, which would contradict the previous item. 
		\item A symmetric argument works for the subcase ${b_{\,\wha}=1}$ and ${b_{\,\whb}=0}$. 
		\item If ${b_{\,\wha}=0}$ and ${b_{\,\whb}=-1}$, subtracting ${\wha + \whb + \whc + \whd = 0}$ from ${-\whb + \whc + \whd}$ would yield that ${-\wha - 2\whb = 0}$, thus ${\wha = -2 \whb = -2 b^5 = -2}$, which is  false. 
		\item Similarly, for ${b_{\,\wha}=-1}$ and ${b_{\,\whb}=0}$, we falsely obtain ${2\, \wha = -\whb = -1}$. 
		\item If ${b_{\,\wha}=b_{\,\whb}=-1}$, then adding ${\wha + \whb + \whc + \whd = 0}$ to the subsum yields the statement ${2 (\whc + \whd) = 0}$ which is false since 
		\[ \begin{array}{r@{\;}c@{\;}l@{}l} 
			\whc + \whd & = & &-20yx^9 + 96y^5x^5 - 320y^9x \\ 
			            & = &(&-4xy) \cdot ((2x^4 - 6y^4)^2 + x^8 + 44y^8) \end{array} \] 
	    is strictly negative for all $(x,y)$ that we consider. 
		\item If ${b_{\,\wha}=b_{\,\whb}=0}$ then again ${\whc + \whd = 0}$, which we just excluded.
		\item If ${b_{\,\wha}=1}$ and ${b_{\,\whb}=-1}$, then subtracting ${\wha + \whb + \whc + \whd = 0}$ from the subsum yields ${-2 \whb = 0}$, a contradiction. 
		\item Similarly, if ${b_{\,\wha}=-1}$ and ${b_{\,\whb}=1}$, then subtracting ${\wha + \whb + \whc + \whd = 0}$ from the subsum falsely yields ${-2\, \wha = 0}$. 
		\end{itemize} 
		
		\smallskip
		
		\noindent Finally, the case ${b_{\,\whc} = b_{\,\whd} = -1}$ can be handled symmetrically.
		
		\smallskip
		
		\item {\em We have ${1 = N(b) < N(a)}$ and ${N(c)=N(d) \leq N(a)}$:} For the first claim, since 
		\[{a = b + 4xy > b}\]
		we have ${N(a) = a^2 > N(b) = 1}$.
		For the second claim, easy calculations show
			\[
				\begin{array}{r@{\;}c@{\;}l@{\;}c@{\;}l}
					N(a)&=&x^4& +\; 4x^3y - 8xy^3 &+ 4y\mathrlap{^4\!,}\\
					N(c)=N(d)&=&x^4& +\; 8x^2y^2 &+ 4y\mathrlap{^4\!.}\\
				\end{array}
			\]
		Thus, to establish the inequality, it is sufficient to observe that 
		\[
		 	\begin{array}{rr@{\;}c@{\;}ll}
		 		&x^2-2xy-2y^2\phantom{)} &\mathclap{\stackrel{\eqref{pell_fdjs}}{=}}& 1\\
		 	\Rightarrow&4xy	\cdot (x^2-2xy-2y^2)  &>& 0\\
		 	\Rightarrow&(4x^3y - 8xy^3) - 8x^2y^2\phantom{)}  &>& 0\\
		 	\Rightarrow& N(a)-N(c)=N(a)-N(d) &>& 0.
		 	\end{array}
		\]
		
		\smallskip
		
		\item {\em We have $\rad(N(\,\wha \cdot \whb \cdot \whc \cdot \whd\,))\leq a^3$:} Since $N(\,\whc\,)=N(\whd\,)$ we have 
		\[
			\begin{array}{r@{\;}c@{\;}l}
				\rad(N(\,\wha \cdot \whb \cdot \whc \cdot \whd\,))
				&=&\rad(N(\,\wha\,) \cdot N(\,\whb\,) \cdot N(\,\whc\,) \cdot N(\whd\,))\\[0.125em]
				&=&\rad(N(\,\wha\,) \cdot N(\,\whb\,) \cdot N(\,\whc\,))\\[0.25em]
				&\leq& \rad(N(\,\wha\,)) \cdot \rad(N(\,\whb\,)) \cdot \rad(N(\,\whc\,))\\[0.25em]
				&=& \rad(N(a)^5) \cdot \rad(N(b)^5) \cdot \rad(N(c)^5)\\[0.25em]
				&=& \rad(N(a)) \cdot \rad(1) \cdot \rad(N(c))\\[0.25em]
				&\leq& \rad(a^2) \cdot N(c)\\[0.25em]
				&\leq& a \cdot N(a)\\[0.25em]
				&=& a \cdot a^2. 
			\end{array}
		\] 
	\end{itemize}
	
	\smallskip
	
	Thus, taking the limit superior over all $(x,y)$ as above, we obtain
	\[
	\begin{array}{r@{\;}c@{\;}l}
		Q_{A(\Zi,4)} &\geq& \textstyle\limsup_{(x,y)} q(\,\wha_{x,y},\whb_{x,y},\whc_{x,y},\whd_{x,y})\\[0.5em]
		&=& \textstyle\limsup_{(x,y)} \displaystyle\frac{\log(\max(N(\,\wha_{x,y}),N(\,\whb_{x,y}),N(\,\whc_{x,y}),N(\whd_{x,y})))}{\log(\rad(N(\,\wha_{x,y} \cdot \whb_{x,y} \cdot \whc_{x,y} \cdot \whd_{x,y})))}    \\[1.3em]
		&=& \textstyle\limsup_{(x,y)} \displaystyle\frac{\log(\max(N(a_{x,y}^5),N(b_{x,y}^5),N(\imagi \cdot c_{x,y}^5),N(\imagi \cdot d_{x,y}^5)))}{\log(\rad(N(\,\wha_{x,y} \cdot \whb_{x,y} \cdot \whc_{x,y} \cdot \whd_{x,y})))}    \\[1.3em]
		&\geq& \textstyle\limsup_{(x,y)}\displaystyle\frac{\log( \max(N(a_{x,y}),N(b_{x,y}),N(c_{x,y}),N(d_{x,y}))^5)}{\log (a_{x,y}^3)}\\[1.3em]
		&\geq& \textstyle\limsup_{(x,y)}\displaystyle\frac{\log( N(a_{x,y})^5)}{\log (a_{x,y}^3)}\\[1.3em]
		&\geq& \textstyle\limsup_{(x,y)}\displaystyle\frac{\log( (a_{x,y}^2)^{5})}{\log (a_{x,y}^3)}\\[1.3em]
		
		&\geq& \textstyle\limsup_{(x,y)}  \displaystyle\frac{10}{3}\cdot\frac{\log(a_{x,y})}{\log(a_{x,y})}   \\[1em]
		&=& \nicefrac{10}{3},
	\end{array}
	\]
	as had to be shown.
\end{proof}

\section{Hurwitz integers} \label{section:Hamiltonian} 

Hamilton discovered that it is possible to obtain a skew field extension
over the reals by introducing three square roots
$\imagi$, $\imagj$ and $\imagk$ of $-1$ which do not commute; to be more precise, we have the multiplication rules $\imagi \cdot \imagj = \imagk, \imagi \cdot \imagk = -\imagj,
\imagj \cdot \imagk = \imagi$, and $v \cdot w = -v \cdot w$ for distinct
$v,w \in \{\imagi,\imagj,\imagk\}$. 

The \emph{Hurwitz integers}, introduced by Hurwitz~\cite{Hur19}, are then the
restriction of this structure to numbers of the form
$q + r \imagi + s \imagj + t \imagk$ with either ${q,r,s,t} \in \Z$ or ${q,r,s,t} \in \nicefrac12+\Z$, which form a ring~$\Hu$ with~$1$.
\begin{defn} \label{def:quaternion-norm} 
	Let $z = q + r \imagi + s \imagj + t \imagk \in \Hu$. Then the \emph{conjugate} $\bar{z}$  of $z$ is given as 
	\[ \bar{z} = q - r \imagi - s \imagj - t \imagk, \]
	and the \emph{norm} of $z$ is defined via 
	\[ N(z) = q^2 + r^2 + s^2 + t^2 = z \cdot \bar{z}. \]
	Finally, we define the \emph{trace} of $z$ by 
	\[ \Tr(z) = z + \bar{z} = 2q. \]
\end{defn}
We summarise some well-known properties of~$\Hu$ that are relevant for our purposes; for proofs see, for instance, Hardy and Wright~\cite[Sections~20.6--20.7 \& 20.9]{hardy-wright} and Voight~\cite[Chapter~11]{Voight}. 

\goodbreak
\begin{prop} \label{prop:hamilton} \quad \nopagebreak
	\begin{enumerate}
		\item[(a)] The rational integers are contained in the center of $\Hu$, that is, they commute with all Hurwitz integers. 
		\item[(b)] An element ${z \in \Hu}$ is a unit if and only if ${N(z) = 1}$. 
		\item[(c)] The norm map is multiplicative, that is, ${N(z \cdot w) = N(z) \cdot N(w)}$ for any ${z,w \in \Hu}$. 
		\item[(d)] The set of units of $\Hu$ consists of the $24$ elements given by 
		\[ \pm1, \quad \pm \imagi, \quad \pm \imagj, \quad \pm \imagk, \quad and \quad \frac{1}{2}(\pm1 \pm \imagi \pm \imagj \pm \imagk). \]  
		\item[(e)] Let $a, b \in \Hu$ be such that ${b \ne 0}$. Then there exist ${q, r \in \Hu}$ such that 
		\[ a = bq + r \quad and \quad N(r) < N(b). \] 
		In other words, the ring $\Hu$ is (right) norm Euclidean. In particular, every (right) ideal of $\Hu$ is (right) principal, and $\Hu$ is a unique factorisation domain. 
		\item[(f)] An element ${z \in \Hu}$ is irreducible if and only if ${N(z) \in \Z}$ is a prime number. In particular, no rational prime number $p$ is irreducible in $\Hu$. 
	\end{enumerate} 
\end{prop}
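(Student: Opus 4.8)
My plan is to treat all six assertions as classical and mostly defer to the references cited just above (Hardy and Wright~\cite{hardy-wright}, Voight~\cite[Chapter~11]{Voight}), writing out in detail only the elementary parts and leaving the one genuinely hard ingredient --- non-commutative unique factorisation --- to a citation. Parts~(a)--(d) are pure norm bookkeeping. For~(a), the stated multiplication rules make every rational scalar commute with each of $\imagi,\imagj,\imagk$, so the integer multiples of $1$ lie in the centre of the rational Hamilton algebra and hence of $\Hu$. For~(c), a direct bilinear computation from the rules gives $\overline{zw}=\bar w\,\bar z$ and $z\bar z=N(z)=\bar z z\in\N$, whence $N(zw)=(zw)\overline{(zw)}=zw\bar w\bar z=z\,N(w)\,\bar z=N(w)\,N(z)$ by centrality of scalars. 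For~(b), if $zw=wz=1$ then $N(z)N(w)=N(1)=1$ in $\N$ forces $N(z)=1$, and conversely conjugation maps $\Hu$ into $\Hu$, so if $N(z)=1$ then $\bar z\in\Hu$ is a two-sided inverse of $z$. For~(d), by~(b) the units are exactly the $z=q+r\imagi+s\imagj+t\imagk\in\Hu$ with $q^2+r^2+s^2+t^2=1$: if $q,r,s,t\in\Z$ this forces one coordinate to be $\pm1$ and the rest $0$, giving the eight elements $\pm1,\pm\imagi,\pm\imagj,\pm\imagk$; and if $q,r,s,t\in\nicefrac12+\Z$ then each square is at least $\nicefrac14$, hence all four equal $\nicefrac14$, giving the sixteen elements $\frac12(\pm1\pm\imagi\pm\imagj\pm\imagk)$, for $24$ in all.

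For the division statement in~(e), given $a,b\in\Hu$ with $b\ne 0$ it suffices to find $c\in\Hu$ with $N(ab^{-1}-c)<1$, since then $r:=a-cb$ satisfies $N(r)=N(ab^{-1}-c)\,N(b)<N(b)$ by multiplicativity. The needed geometric fact is that every rational quaternion $\xi$ lies at norm-distance $<1$ from a Hurwitz integer: if all four coordinates of $\xi$ lie in $\Z$, or all four lie in $\nicefrac12+\Z$, then $\xi\in\Hu$ already; otherwise some coordinate rounds to a nearest rational integer with error strictly below $\nicefrac12$, so rounding all four coordinates produces an element of $\Z^4\subseteq\Hu$ whose squared distance to $\xi$ is strictly below $1$. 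From right norm-Euclideanness, the standard minimal-norm argument then gives that every right ideal of $\Hu$ is right principal. For~(f), if $N(z)=p$ is prime then any factorisation $z=uv$ yields $N(u)N(v)=p$, so one of $u,v$ has norm $1$ and is a unit by~(b), whence $z$ is irreducible; conversely no rational prime $p$ is irreducible, since Lagrange's four-square theorem gives $\pi\in\Hu$ with $N(\pi)=p$ and then $p=\pi\bar\pi$ is a product of two non-units, and more generally, when $N(z)$ is composite the principality of a right ideal of the form $z\Hu+p\Hu$, for a suitable rational prime $p\mid N(z)$, delivers a proper non-unit right divisor of $z$, so $z$ is reducible.

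The only step that is not routine is the unique-factorisation claim inside~(e), namely that $\Hu$ is a unique factorisation domain \emph{in the precise sense of Definition~\ref{def:rings}}: right norm-Euclideanness and right principality are easy, but establishing the existence and essential uniqueness --- up to right similarity and the metacommutation reordering of prime factors --- of factorisations of a Hurwitz integer into irreducibles is a genuine theorem of Hurwitz, which I would prove only by citing Voight~\cite[Chapter~11]{Voight} (or the book of Conway and Smith on quaternions and octonions) rather than reproducing the argument. This is where I expect the main difficulty to lie; everything else is elementary norm arithmetic.
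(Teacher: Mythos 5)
You should first note that the paper itself contains no proof of this proposition: it is presented as a summary of well-known facts, with the proofs delegated entirely to Hardy and Wright and to Voight. Your proposal is therefore, if anything, more detailed than the paper, and its overall structure --- verify the norm bookkeeping of (a)--(d) and the rounding argument for the division step in (e) directly, and cite the literature for non-commutative unique factorisation --- is perfectly compatible with the paper's treatment; those elementary arguments are essentially correct (in particular your observation that the only bad rounding case, all four coordinates at distance exactly $\nicefrac12$ from $\Z$, is precisely the case where the point is already a Hurwitz integer). Two details are slightly off, though both are easily repaired. In (e) you approximate $ab^{-1}$ and obtain $a = cb + r$, i.e.\ the quotient on the left of $b$; the proposition asserts $a = bq + r$, and the sidedness matters, since the minimal-norm argument for a \emph{right} ideal $I$ needs $bq \in I$, which uses exactly the form $a = bq + r$. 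You should approximate $b^{-1}a$ instead; the same rounding lemma applies. In (f), the claim that principality of $z\Hu + p\Hu$ ``delivers a proper non-unit right divisor'' compresses a genuine case analysis: writing $z\Hu + p\Hu = d\Hu$, the norm $N(d)$ divides both $N(z)$ and $p^2$; the possibility $N(d)=1$ must be excluded (multiplying a relation $1 = z\alpha + p\beta$ on the left by $\bar z$ and using $p \mid N(z)$ gives $p \mid z$, contradicting $z\Hu + p\Hu = \Hu$), and in the case $d\Hu = p\Hu$ with $N(z)=p^2$ the divisor obtained is an associate of $z$, so one must still invoke $p = \pi\bar\pi$ from Lagrange to conclude reducibility; note also that the generator $d$ is a \emph{left} divisor of $z$ (one has $z = dw$), so calling it a right divisor is at least a terminological slip. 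None of this changes the architecture of your argument, but as written those two steps are not quite complete.
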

\begin{rem}
	One could alternatively consider the subring 
	\[ \mathbb{L}:=\{ q + r \imagi + s \imagj + t \imagk \mid q,r,s,t \in \Z\} \] 
	of $\Hu$ that is sometimes named after Lipschitz. However, this ring does not share the good arithmetical properties of $\Hu$, such as the Euclidean property; in fact, it is not even a (right) principal ideal domain, see Voight~\cite[Example~11.3.8]{Voight}. 
	
	We do however point out the interesting property that every ${z \in \Hu}$ 
	is associated with a ${z' \in \mathbb{L}}$; see, for instance, Hardy and Wright~\cite[Theorem~371]{hardy-wright}. 
\end{rem} 

In particular, it follows from Proposition~\ref{prop:hamilton}$(e)$ that there is a Euclidean algorithm for computing greatest common left and right divisors. Similarly, versions of Bézout's Lemma hold from both sides: for example, given two elements ${a, b \in \Hu}$ with ${b \ne 0}$ there exist ${u, v \in \Hu}$ such that the greatest common right divisor $\delta$ of $a$ and $b$ can be written as 
$\delta = u a + v b$; again, see Hardy and Wright~\cite[Section~20.8]{hardy-wright}, for instance.

The following auxiliary formulae will prove useful. 
\begin{lem} \label{lem:quadrat}  
	Let either ${q,r,s,t} \in \Z$ or ${q,r,s,t} \in \nicefrac12+\Z$, and let ${z = q + r\imagi + s \imagj + t \imagk}$. Then we have that
	\begin{enumerate} 
	\item[(a)] $z^2 = q^2 - r^2 - s^2 - t^2 + 2q(r\imagi + s \imagj + t \imagk)$, 
	\item[(b)] $-\Wdiff{\imagi}{\imagk}\imagi \cdot z\cdot\imagi\Wdiff{\imagi}{\imagk}  =  q + r\imagi - s \imagj - t \imagk$,
	\item[(c)] $-\Wdiff{\imagj}{\imagk}\imagj \cdot z\cdot\imagj\Wdiff{\imagj}{\imagk}  =  q - r\imagi + s \imagj - t \imagk$,
	\item[(d)]  $-\imagk\cdot z\cdot\imagk  =  q - r\imagi - s \imagj + t \imagk$.
	\end{enumerate} 
\end{lem}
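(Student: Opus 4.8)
The plan is to verify the four identities by direct computation using the multiplication rules for the quaternion units; there is no cleverness required, only care with signs and with the non-commutativity.

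For part~(a), I would expand $z^2 = (q + r\imagi + s\imagj + t\imagk)^2$ by distributing. The nine cross terms split into the purely real contribution $q^2 + r^2\imagi^2 + s^2\imagj^2 + t^2\imagk^2 = q^2 - r^2 - s^2 - t^2$ (using $\imagi^2 = \imagj^2 = \imagk^2 = -1$), the six mixed terms among $\imagi,\imagj,\imagk$, which cancel in pairs because $vw = -wv$ for distinct $v,w \in \{\imagi,\imagj,\imagk\}$ (so that $rs(\imagi\imagj + \imagj\imagi) = 0$, etc.), and the six terms in which $q$ is one factor, which combine to $2q(r\imagi + s\imagj + t\imagk)$ since rational integers are central by Proposition~\ref{prop:hamilton}(a). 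Adding these gives the claimed formula.

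For parts~(b)--(d), the computation is even simpler. Here I would use that conjugation by a unit is an inner automorphism fixing the real part and permuting/signing the imaginary units: one computes $\imagi v \imagi^{-1}$ for $v \in \{\imagi,\imagj,\imagk\}$ and reads off how each basis vector transforms. Concretely, since $\imagi^{-1} = -\imagi$, we have $-\imagi z \imagi = \imagi z \imagi^{-1}$; then $\imagi \cdot 1 \cdot \imagi^{-1} = 1$, $\imagi\imagi\imagi^{-1} = \imagi$, while $\imagi\imagj\imagi^{-1} = -\imagj$ and $\imagi\imagk\imagi^{-1} = -\imagk$ using $\imagi\imagj = \imagk$, $\imagi\imagk = -\imagj$ and $\imagk\imagi = \imagj$. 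This yields (b); parts (c) and (d) follow by the same computation with the roles of $\imagi,\imagj,\imagk$ cyclically permuted, and in case (d) one can also note $-\imagk z \imagk = -\imagi(-\imagj z \imagj)\imagi$ composing the maps from (b) and (c) if one prefers to reuse earlier parts.

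There is essentially no obstacle here: the only thing to be careful about is the sign bookkeeping in the non-commutative products and the fact that the half-integer case $q,r,s,t \in \nicefrac12 + \Z$ requires no separate treatment, since the multiplication rules used are $\Z$-bilinear (indeed $\R$-bilinear) and hence apply verbatim regardless of whether the coefficients are integers or half-integers. I would present (a) with the three groups of terms displayed and (b)--(d) as a one-line remark that they record conjugation by the units $\imagi$, $\imagj$, $\imagk$ respectively.
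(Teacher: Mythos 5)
Your proposal is correct and follows essentially the same route as the paper: part~(a) by direct expansion using the anticommutation rules and the centrality of the (half-)integer coefficients, and parts~(b)--(d) by observing that $w \mapsto -\alpha w \alpha$ (equivalently, conjugation by the unit $\alpha$) is linear and acts on the basis units $\beta$ as $+\beta$ if $\alpha=\beta$ and $-\beta$ otherwise. Your explicit remark that the half-integer case needs no separate treatment is a small but welcome addition the paper leaves implicit.
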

\begin{proof} 
	The first part is immediate by using the conventions ${\imagj \imagi = - \imagi \imagj}$, ${\imagk \imagj = - \imagj \imagk}$, and ${\imagk \imagi = - \imagi \imagk}$ together with the fact that $q$, $r$, $s$, and $t$ all lie in the center of~$\Hu$. 
	
	\smallskip
	
	To simplify the proofs of~(b--d) we use that the map 
	\[\begin{array}{r@{\;}c@{\;}c@{\;}l}
		c_\alpha \colon& \Hu& \rightarrow& \Hu\\
		&w &\mapsto& -\alpha \cdot w \cdot \alpha\\
	\end{array}
	  \] 
	is easily seen to be $\Z$-linear for every choice of ${\alpha \in \{\imagi, \imagj, \imagk\}}$. It is thus sufficient to consider how it acts on inputs 
	${\beta \in \{\imagi, \imagj, \imagk\}}$; namely,
	\[ - \alpha \cdot \beta \cdot \alpha = \begin{cases} 
		+\beta & \text{if } \alpha = \beta, \\
		-\beta & \text{if } \alpha \ne \beta, 
	\end{cases} \] 
	for all ${\alpha, \beta \in \{\imagi, \imagj, \imagk\}}$. 
\end{proof} 

\medskip 
We begin by establishing a result that can be understood as a statement about an analogue of the $abc$-conjecture in the setting of Hurwitz integers. 
\goodbreak
\begin{thm}
	$Q_{A(\Hu,3)} \geq Q_{U(\Hu,\emptyset,3)} \geq 2$.
\end{thm}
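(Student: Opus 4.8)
The plan is to mimic the construction used in Theorem~\ref{th:elkies} but in the lowest-dimensional case~$n=3$, exploiting the extra unit~$\imagj$ (or more precisely the conjugation maps $c_\alpha$ from Lemma~\ref{lem:quadrat}) that the quaternions provide but the Gaussian integers do not. Concretely, I would look for a polynomial identity of the shape $\alpha_{x,y}^k + \imagi\cdot\beta_{x,y}^k + \imagj\cdot\gamma_{x,y}^k = 0$ (possibly with a fifth power, $k=5$, to match the $abc$-philosophy, but more plausibly with $k=3$ here), where $\alpha,\beta,\gamma$ are quadratic forms in two rational-integer parameters $x,y$, and where one of the three entries — say $\beta_{x,y}$ — collapses to a fixed unit along the solutions of an auxiliary Pell equation $(x-y)^2 - 3y^2 = 1$ (or a similar norm-one equation). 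The point of the unit~$\imagj$ on the $\gamma$-term is, just as the~$\imagi$ played that role in Theorem~\ref{th:elkies}, to kill the obstruction in the subsum analysis: since only one of the three summands would carry a $\imagj$-component (and none a $\imagi$-component, say), condition~(S2) reduces to checking a handful of two-term sums, which one rules out by the same elementary positivity/size arguments as before.

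After fixing the identity, the steps are: \emph{(i)} verify~(Z) by the polynomial identity; \emph{(ii)} verify that along the Pell solutions one of the entries is a unit, so its norm is~$1$ and in particular~(F) with~$F=\emptyset$ is vacuous and all entries are nonzero; \emph{(iii)} verify~(G2), i.e.\ that the three entries are pairwise coprime from both sides in~$\Hu$ — here, since any common right divisor~$p$ would have $N(p)\mid\gcd$ of the rational norms, it suffices to show the three \emph{rational} norms $N(\alpha),N(\beta),N(\gamma)$ are pairwise coprime, which one does by forming suitable $\Z$-linear combinations of the norms (as in the $a\cdot b - c\cdot d = -16x^2y^2$ computation) and tracing common factors down to a contradiction with the Pell equation; \emph{(iv)} verify~(S2) by the case analysis sketched above; \emph{(v)} estimate $\rad(N(\widehat a\,\widehat b\,\widehat c\,))$ from above using Proposition~\ref{prop:radical}(b) and the fact that two of the norms may coincide or one is~$1$, reducing the radical to roughly $\rad(N(\alpha))\cdot\rad(N(\gamma)) \le N(\alpha)\cdot N(\gamma)$, i.e.\ essentially $N(\alpha)^2$ if the norms are comparable; \emph{(vi)} take the $\limsup$ over the infinitely many Pell solutions to read off $q \to k\cdot\log N(\alpha)/\log(\text{radical bound})$, which with $k=3$ and a radical bounded by~$N(\alpha)$ (one of the two remaining factors being a unit) yields the value~$2$, matching $2\cdot\frac{3}{3}$ — or more carefully $\log(N(\alpha)^3)/\log(N(\alpha)^{3/2}) = 2$ if the exponent is~$3$ and the max-norm is $N(\alpha)$ while the radical is $N(\alpha)^{3/2}$; the exact bookkeeping of exponents and of which entries survive is what pins down the constant.

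The main obstacle I anticipate is \emph{finding the right identity}: one needs three quadratic forms whose cubes (or fifth powers) sum to zero after multiplication by appropriate units, with exactly one form forced to a unit by a Pell equation, and with the norms arranged so that the radical bound beats the max-norm by the factor needed to get~$2$ rather than something smaller. Unlike the $n=4$ Darmon--Granville/Elkies example, where one has the freedom of two "real" and two "imaginary" summands, here there are only three slots, so the identity is tighter; I would expect it to come either from a known parametrisation of solutions to $X^k + Y^k = Z^k$-type equations twisted by units, or from adapting the specific Elkies quadruple by collapsing two of its entries. A secondary, more routine obstacle is ensuring the gcd condition~(G2) genuinely holds over the non-commutative ring~$\Hu$: one must be careful that "coprime rational norms implies coprime in~$\Hu$ from both sides" is valid, which it is because a common right (or left) divisor~$p$ has $N(p)>1$ dividing both norms (Proposition~\ref{prop:hamilton}(b),(c)), exactly as in the Gaussian case in Corollary~\ref{cor:compare_Gauss}, so no genuinely new difficulty arises there beyond writing it out.
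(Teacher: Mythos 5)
Your proposal is a research plan rather than a proof: everything hinges on an explicit identity of the form $\alpha_{x,y}^k+\imagi\cdot\beta_{x,y}^k+\imagj\cdot\gamma_{x,y}^k=0$ with one entry collapsing to a unit along a Pell family, and you never exhibit such an identity --- indeed you name finding it as the main obstacle. Steps (i)--(iv) are routine once the identity exists, but without it there is nothing to verify, so the argument has a genuine gap at its core. Moreover, the quantitative bookkeeping in steps (v)--(vi) does not actually deliver the constant $2$: with three entries, one of norm $1$ and the other two of comparable norm $N(\alpha)$, the generic bound is $\rad(N(\wha\,\whb\,\whc\,))\le\rad(N(\alpha))\cdot\rad(N(\gamma))\le N(\alpha)\cdot N(\gamma)\approx N(\alpha)^2$, giving a quality near $k/2$ (so $\nicefrac32$ for $k=3$); your alternative reading ``radical $=N(\alpha)^{3/2}$'' is asserted without any mechanism forcing $\rad(N(\alpha))\le N(\alpha)^{1/2}$, which would essentially require $N(\alpha)$ to be a perfect square. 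That square phenomenon is exactly the lever you are missing.

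For contrast, the paper's proof needs no exotic identity at all: it takes the rational-integer triples $(2^{2\ell},1,-2^{2\ell}-1)$, which trivially satisfy (Z), (S2) and (G2), and exploits that for $m\in\Z$ one has $N(m)=m^2$, so the numerator of $q_{\Hu}$ doubles while the radical in the denominator is still only $\rad\bigl(2\cdot(2^{2\ell}+1)\bigr)\le 2^{2\ell+2}$; the quaternionic input is just the observation $(2^\ell\imagi+\imagk)^2=-2^{2\ell}-1$ from Lemma~\ref{lem:quadrat}(a), and the quality $\tfrac{4\ell}{2\ell+2}\to 2$. In effect this is Proposition~\ref{prop:compare_Hamilton} ($q_{\Hu}=2q_{\Z}$) applied to a family of $\Z$-triples of quality tending to $1$. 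If you want to salvage your route, you would either need to produce the three-term power identity you postulate (with $k\ge 4$ to beat the $k/2$ barrier), or build in the ``norm is a square'' mechanism explicitly --- but as written the proposal does not prove the bound.
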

\begin{proof}
	In view of Remark~\ref{rem:fact} it suffices to prove the second inequality. 
	
	\smallskip
	
	First note that Lemma~\ref{lem:quadrat}(a) implies that, for every~${\ell \in \N}$,
	\begin{equation} \label{eq:rupert} (2^\ell \imagi + \imagk)^2 =  -2^{2\ell}-1.\tag{$\ddagger$}\end{equation} 
	Next consider triples of the form $(2^{2\ell},1, -2^{2\ell}-1)$ with ${\ell \in \N}$. It is obvious that their entries are pairwise coprime and sum to zero. Moreover, the subsum condition~(S2) from Definition~\ref{def:U_allg} is also easily seen to be satisfied for every~${\ell \in \N}$. 
	
	Using Proposition~\ref{prop:radical} and the multiplicativity of the norm multiple times, we see that for any~$\ell \in \N$ we have
	\[\begin{array}{r@{\;}c@{\;}l}
		q(2^{2\ell},1, -2^{2\ell}-1) & = &\displaystyle \frac{\log(\max(N(2^{2\ell}),N(1),N(-2^{2\ell}-1)))}{\log(\rad(N(2^{2\ell} \cdot 1 \cdot (-2^{2\ell}-1))))}\\[1em]
		& \geq &\displaystyle \frac{\log(N(-2^{2\ell}-1))}{\log(\rad(N(2^{2\ell})) \cdot \rad(N(-2^{2\ell}-1)))}\\[1em]
		& = &\displaystyle \frac{\log((-2^{2\ell}-1)^2)}{\log(2 \cdot \rad(N(-2^{2\ell}-1)))}\\[1em]
		& \stackrel{\eqref{eq:rupert}}{\geq} & \displaystyle\frac{\log(2^{4\ell})}{\log(2 \cdot \rad(N((2^\ell \imagi + \imagk)^2)))}\\[1em]
		& = &\displaystyle \frac{4\ell \cdot \log(2)}{\log(2 \cdot \rad(N(2^\ell \imagi + \imagk)))}\\[1em]
		& = &\displaystyle \frac{4\ell \cdot \log(2)}{\log(2 \cdot \rad(2^{2\ell} + 1))}\\[1em]
		& \geq &\displaystyle \frac{4\ell \cdot \log(2)}{\log(2 \cdot 2^{2\ell + 1})}\\[1em]
		& = &\displaystyle \frac{4\ell \cdot \log(2)}{\log(2^{2\ell + 2})}\\[1em]
		& = &\displaystyle \frac{4\ell}{(2\ell + 2)} \frac{\log(2)}{\log(2)}.\\
	\end{array}\]
	It follows that \[\limsup_{\ell \to \infty} q(2^{2\ell},1, -2^{2\ell}-1) = 2,\] 
	which implies $Q_{U(\Hu,\emptyset,3)} \geq 2$ as required.
\end{proof}

In order to derive results about $Q_{U(\Hu,F,n)}$ and $Q_{A(\Hu,n)}$ for various $F$ and~$n$ from the results over rational integers, we compare $q_{\Hu}(a)$ and $q_\Z(a)$ for an integer $n$\nobreakdash-tuple~${a \in \Z^n}$. The proof of the following proposition is analogous to that of Proposition~\ref{prop:compare_Gauss} since, again, ${N(a) = a^2}$~for any rational integer~$a$. 
\begin{prop} \label{prop:compare_Hamilton} 
	Let $a = (a_1, \ldots, a_n) \in \Z^n$ be such that ${a_1, \ldots, a_n \ne 0}$. Then 
	\[ q_{\Hu}(a) = 2 \cdot q_\Z(a).\]
\end{prop}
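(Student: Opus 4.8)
The plan is to mirror the proof of Proposition~\ref{prop:compare_Gauss} essentially verbatim, the only new input being the behaviour of the Hurwitz norm on rational integers. First I would observe that the two quality functions $q_\Z$ and $q_\Hu$ have the same shape and differ only in which norm is applied to the entries $a_i$. Since the rational integers lie in the centre of $\Hu$ by Proposition~\ref{prop:hamilton}(a) and each rational integer $a$ is embedded into $\Hu$ as $a + 0\imagi + 0\imagj + 0\imagk$, Definition~\ref{def:quaternion-norm} gives $N(a) = a^2$ for every such $a$.

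Substituting this into Definition~\ref{def:quality_neu}, the numerator of $q_\Hu(a)$ becomes $\log(\max(a_1^2,\ldots,a_n^2)) = 2\log(\max(|a_1|,\ldots,|a_n|))$, while the argument of the radical in the denominator becomes $a_1^2 \cdot \ldots \cdot a_n^2 = (a_1 \cdot \ldots \cdot a_n)^2$. The one point to record is that $\rad$ is unchanged by squaring its argument, because it extracts the largest square-free positive divisor and a prime divides $m$ if and only if it divides $m^2$; hence $\rad((a_1 \cdot \ldots \cdot a_n)^2) = \rad(a_1 \cdot \ldots \cdot a_n)$. Combining the two computations yields
\[ q_\Hu(a) = \frac{2\log(\max(|a_1|,\ldots,|a_n|))}{\log(\rad(a_1 \cdot \ldots \cdot a_n))} = 2\,q_\Z(a), \]
as claimed.

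There is no genuine obstacle: the statement is a bookkeeping consequence of $N$ restricted to $\Z$ being the squaring map, exactly as in the Gaussian case of Proposition~\ref{prop:compare_Gauss}. If one wanted to be fully explicit, the only subtlety worth spelling out is why the norm on $\Z \subseteq \Hu$ is $a \mapsto a^2$ rather than $a \mapsto |a|$ — this is the deliberate design choice for $N$ explained in the remark following Definition~\ref{def:quality_neu}, and it is precisely what produces the factor of~$2$.
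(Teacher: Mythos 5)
Your proof is correct and matches the paper's: the paper simply notes that the argument of Proposition~\ref{prop:compare_Gauss} carries over verbatim because $N(a)=a^2$ for rational integers viewed in $\Hu$, which is exactly the computation you carry out (including the observation that $\rad$ is invariant under squaring its argument).
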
 

The following result can then be shown by a proof analogous to that of Corollary~\ref{cor:compare_Gauss}; we leave it to the reader to verify that non-commutativity and the distinction between left  and right divisors do not interfere with the arguments used there.
\begin{cor} \label{cor:compare_Hamilton} 
	Let ${n \ge 3}$ and let ${F \subseteq \N}$ be a finite set, where ${\min F \geq 3}$ in case that ${F \ne \emptyset}$. Then 
	\[\begin{array}{l@{\;}c@{\;}l@{\;}c@{\;}l}
		Q_{\Hu, U(\Hu,F,n)} &\geq& 2 \cdot Q_{\Z, U(\Hu,F,n) \cap \Z^n} &=& 2 \cdot Q_{\Z, U(\Z,F,n)},\\
		Q_{\Hu, A(\Hu,n)} &\geq& 2 \cdot Q_{\Z, A(\Hu,n) \cap \Z^n} &=& 2 \cdot Q_{\Z, A(\Z,n)}.
	\end{array}\]
	
\end{cor}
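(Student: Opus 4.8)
The plan is to follow the proof of Corollary~\ref{cor:compare_Gauss} essentially verbatim, with Proposition~\ref{prop:compare_Hamilton} playing the role of Proposition~\ref{prop:compare_Gauss} and Proposition~\ref{prop:hamilton} playing the role of Proposition~\ref{prop:gauss}, and to check the assertion made just before the statement that non-commutativity and the left/right distinction for divisors cause no trouble. As in the Gaussian case the inequalities are immediate: if $a=(a_1,\dots,a_n)\in U(\Hu,F,n)\cap\Z^n$ has all entries non-zero, then $q_{\Hu}(a)=2\,q_{\Z}(a)$ by Proposition~\ref{prop:compare_Hamilton}, and passing to the limit superior over a sequence of such tuples gives $Q_{\Hu,U(\Hu,F,n)}\geq 2\,Q_{\Z,U(\Hu,F,n)\cap\Z^n}$; the reasoning for $A$ is identical. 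Everything therefore reduces to the set identities $U(\Z,F,n)=U(\Hu,F,n)\cap\Z^n$ and $A(\Z,n)=A(\Hu,n)\cap\Z^n$, from which the two claimed equalities follow at once.

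For the inclusion from left to right, let $a$ lie in $U(\Z,F,n)$ (respectively $A(\Z,n)$). Conditions~(Z), (S1) and~(S2) are purely additive statements about the entries and hence do not depend on the ambient ring. For condition~(G2) (respectively~(G1)), suppose the left g.c.d.\ of $a_i$ and $a_j$ (respectively of $a_1,\dots,a_n$) in $\Hu$ were a non-unit $p$; then $p$ left-divides each of the entries in question, say $a_k=pb_k$ with $b_k\in\Hu$, so by multiplicativity of the norm (Proposition~\ref{prop:hamilton}(c)) we get $N(p)\mid N(a_k)=a_k^2$ for each such $k$, and therefore $N(p)$ divides the square of the corresponding rational g.c.d., which equals $1$; thus $N(p)=1$ and $p$ is a unit by Proposition~\ref{prop:hamilton}(b), a contradiction. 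The identical argument applies to a non-unit right g.c.d., since then $a_k=b_kp$ still gives $N(a_k)=N(b_k)N(p)$. For condition~(F), suppose $f\in F$ divides some $a_i$ in $\Hu$; since $f\in\Z$ is central in $\Hu$ by Proposition~\ref{prop:hamilton}(a), left and right divisibility by $f$ coincide, so we may write $a_i=fx$ with $x=q+r\imagi+s\imagj+t\imagk\in\Hu$. Comparing coordinates and using $a_i\in\Z$ forces $fr=fs=ft=0$, hence $r=s=t=0$ and $fq=a_i$; then $q=a_i/f\in\mathbb{Q}$, and since $x=q$ must lie in $\Hu$ we conclude $q\in\Z$, so $f$ divides $a_i$ in $\Z$, contradicting $a\in U(\Z,F,n)$.

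For the reverse inclusion, let $a\in U(\Hu,F,n)\cap\Z^n$ (respectively $A(\Hu,n)\cap\Z^n$); its entries are rational integers summing to zero, and~(S1)/(S2) transfer back to $\Z$ for the same reason as above. If the entries had a common rational divisor $d>1$ (two of them in the case of~(G2), all of them in the case of~(G1)), then $d$ would be a common left and right divisor of them in $\Hu$ with $N(d)=d^2>1$, hence a non-unit, contradicting~(G2) (respectively~(G1)) over $\Hu$; likewise an integer multiple relation $f\mid a_i$ with $f\in F$ lifts unchanged to $\Hu$ and contradicts~(F) there. Thus $a\in U(\Z,F,n)$ (respectively $A(\Z,n)$), which proves the set identities and hence the corollary.

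The only step where non-commutativity could conceivably intervene is the two-sidedness in the divisibility arguments above, and it is harmless for two structural reasons already recorded in Proposition~\ref{prop:hamilton}: the norm is multiplicative regardless of the order of the factors, so $N(p)\mid N(a_k)$ holds whether $p$ is a left or a right divisor; and the rational integers are central in $\Hu$, so for the rational elements $d$ and $f$ occurring in the proof the notions of left multiple and right multiple collapse to the single notion used over $\Z$. No other part of the proof of Corollary~\ref{cor:compare_Gauss} touches the ring structure, so it carries over without change. I expect this bookkeeping of the left/right distinction to be the only real obstacle, and a very mild one.
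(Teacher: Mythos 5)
Your proposal is correct and follows exactly the route the paper intends: the paper's own ``proof'' simply declares the argument analogous to Corollary~\ref{cor:compare_Gauss} and leaves the left/right and non-commutativity checks to the reader, which is precisely the bookkeeping you carry out (via multiplicativity of the norm, Proposition~\ref{prop:hamilton}(b), and centrality of the rational integers). No gaps.
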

\noindent From this, our first general result for Hurwitz integers follows.
\begin{thm} \label{thm:H1}\quad\nopagebreak\begin{enumerate} 
		\item[(a)] For each odd ${n \geq 5}$ and any finite set $F$ which does not contain any divisors of 2, 5 or 10 we have that $Q_{U(\Hu,F,n)} \ge \nicefrac{10}{3}$. 
		\item[(b)] Let ${n \geq 6}$ and let $F$ be an arbitrary finite set such that ${\min F \geq 3}$ in case that ${F \ne \emptyset}$. Then ${Q_{U(\Hu,F,n)} \geq \nicefrac{5}{2}}$. 
		\item[(c)] For each $n \geq 3$ we have that ${Q_{A(\Hu,n)} \geq 
		4n - 10}$. 
	\end{enumerate} 
\end{thm}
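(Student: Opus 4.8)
The plan is to obtain all three parts exactly as in the proof of Theorem~\ref{thm:G1}, i.e.\ purely formally from the comparison result Corollary~\ref{cor:compare_Hamilton} combined with the already established lower bounds over the rational integers. In fact, once Corollary~\ref{cor:compare_Hamilton} is in hand, there is nothing left to do beyond bookkeeping: each statement over $\Hu$ is twice the corresponding statement over $\Z$.

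For part~(a), I would invoke Theorem~\ref{thm:paper1}(a), which gives $Q_{\Z,U(\Z,F,n)} \geq \nicefrac53$ for every odd $n \geq 5$ whenever $F$ contains no divisor of $2$, $5$, or $10$. Feeding this into the first line of Corollary~\ref{cor:compare_Hamilton} yields $Q_{\Hu,U(\Hu,F,n)} \geq 2 \cdot \nicefrac53 = \nicefrac{10}{3}$. For part~(b), I would similarly use Theorem~\ref{thm:paper1}(b), which gives $Q_{\Z,U(\Z,F,n)} \geq \nicefrac54$ for $n \geq 6$ and arbitrary admissible $F$, so that Corollary~\ref{cor:compare_Hamilton} delivers $Q_{\Hu,U(\Hu,F,n)} \geq 2 \cdot \nicefrac54 = \nicefrac52$. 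For part~(c), I would instead use the lower bound $Q_{\Z,A(\Z,n)} \geq 2n-5$ of Browkin and Brzezi\'nski~\cite[Theorem~1]{BB94}, valid for all $n \geq 3$; the second line of Corollary~\ref{cor:compare_Hamilton} then gives $Q_{\Hu,A(\Hu,n)} \geq 2(2n-5) = 4n-10$.

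The only substantive point, and hence the main thing to verify, is that Corollary~\ref{cor:compare_Hamilton} really does hold — that is, that the argument of Corollary~\ref{cor:compare_Gauss} transfers to the non-commutative ring $\Hu$ without the distinction between left and right divisors causing trouble. Here one checks: conditions~(Z), (S1), (S2) are ring-independent and so are unaffected; the gcd conditions~(G1), (G2) descend from $\Hu$ to $\Z$ because any (one-sided) irreducible $\Hu$-divisor $p$ of a rational integer $a$ forces $N(p) \mid N(a) = a^2$ with $N(p) \neq 1$ by Proposition~\ref{prop:hamilton}(b)--(c), producing a nontrivial rational common divisor; and condition~(F) descends because $\mathbb{Q}$ lies in the center of $\Hu$ (Proposition~\ref{prop:hamilton}(a)) and is closed under inversion, so $a_i = f x$ with $f \in F$ and $x \in \Hu$ forces $x = a_i/f \in \Hu \cap \mathbb{Q} = \Z$. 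Finally $N(a) = a^2$ for $a \in \Z \subseteq \Hu$ gives the factor-of-$2$ relation $q_{\Hu}(a) = 2\,q_\Z(a)$ of Proposition~\ref{prop:compare_Hamilton}, exactly as over $\Zi$. None of these steps is genuinely harder than in the Gaussian case; I expect the proof to be a one-line citation of Theorem~\ref{thm:paper1}, \cite[Theorem~1]{BB94}, and Corollary~\ref{cor:compare_Hamilton}.
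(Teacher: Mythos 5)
Your proposal matches the paper's proof exactly: both parts (a) and (b) follow from Theorem~\ref{thm:paper1} and part (c) from Browkin--Brzezi\'nski~\cite[Theorem~1]{BB94}, each doubled via Corollary~\ref{cor:compare_Hamilton}. Your additional sketch of why Corollary~\ref{cor:compare_Hamilton} survives non-commutativity is precisely the verification the paper delegates to the reader, so nothing is missing.
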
 
\begin{proof}
	These statements follow by combining Corollary~\ref{cor:compare_Hamilton} with  Theorem~\ref{thm:paper1} from the previous article~\cite{paper1} and with a result of Browkin and Brzeziński~\cite[Theorem~1]{BB94}, respectively.
\end{proof}

The next result shows that we can achieve better bounds than the ones obtainable from the previously known results for the case of rational integers. 
\begin{thm} \label{thm:Quaternionen-A} 
	For each ${n \geq 3}$ we have that ${Q_{A(\Hu,n)} \geq 
	8n - 20}$.
\end{thm}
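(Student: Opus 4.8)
\noindent\textit{Proof plan.} By Definition~\ref{def:quality_neu} it suffices to exhibit, for each $n\ge 3$, an infinite family of $n$-tuples in $A(\Hu,n)$ whose qualities are all equal to $8n-20$. The starting point, already behind Theorem~\ref{thm:H1}(c), is that passing from $\Z$ to $\Hu$ doubles the quality (Corollary~\ref{cor:compare_Hamilton}); to reach $8n-20=2\cdot(4n-10)$ one further factor of~$2$ is needed, and this will be obtained from genuinely quaternionic entries rather than from rational tuples.

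The point is that over $\Hu$ an entry can be made to have norm equal to a large power of a single prime, so that its radical is just that prime: if $N(z)=p^{K}$ then $z$ contributes $K\log p$ to the numerator of the quality but only $\log p$ to its denominator. To build such entries while controlling a linear relation, recall that for any odd integer $D$ one has $4D-1\equiv 3\pmod 8$, so $4D-1$ is not of the forbidden form $4^{a}(8b+7)$ and, by Legendre's three-square theorem, is a sum of three squares $r^{2}+s^{2}+t^{2}$, which are necessarily odd since $4D-1\equiv 3\pmod 4$. This produces a Hurwitz integer
\[
 z=\tfrac12+\tfrac{r}{2}\,\imagi+\tfrac{s}{2}\,\imagj+\tfrac{t}{2}\,\imagk\in\Hu
\]
with $N(z)=\tfrac14\bigl(1+r^{2}+s^{2}+t^{2}\bigr)=D$ and $\Tr(z)=1$, whence $N(1-z)=(1-z)\overline{(1-z)}=1-\Tr(z)+N(z)=D$ as well. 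Thus $1$ is a sum of two Hurwitz integers of any prescribed odd prime-power norm.

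\textbf{The construction.} Fix $n\ge 3$, let $p$ range over the odd primes, put $D=p^{\,8n-20}$, and --- starting from the two-term decomposition just described and then repeatedly replacing one summand $b$ by a pair $b=b'+b''$ with $N(b')=N(b'')=D$ --- produce $a_{1},\dots,a_{n-1}\in\Hu$, each of norm $D$, with $a_{1}+\dots+a_{n-1}=1$ and chosen generically, so that for no nonempty proper subset $S$ of $\{1,\dots,n-1\}$ does $\sum_{i\in S}a_{i}$ equal $0$ or $1$; finally set $a_{n}=-1$. Then $(a_{1},\dots,a_{n})\in A(\Hu,n)$: condition~(Z) holds by construction; condition~(G1), for the left and for the right greatest common divisor alike, holds because $a_{n}$ is a unit; and condition~(S1) holds because a vanishing proper subsum would force, if $a_{n}$ is omitted, a subsum of the $a_{i}$ equal to $0$, and, if $a_{n}$ is included, $\sum_{i\in S}a_{i}=1$ for a proper subset $S$, both of which have been excluded. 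Since $\max_{i}N(a_{i})=D$ and $\prod_{i}N(a_{i})=D^{\,n-1}$, so that $\rad\bigl(\prod_{i}N(a_{i})\bigr)=p$, each such tuple has quality exactly
\[
 \frac{\log\bigl(\max_{i}N(a_{i})\bigr)}{\log\bigl(\rad\bigl(\prod_{i}N(a_{i})\bigr)\bigr)}=\frac{\log\bigl(p^{\,8n-20}\bigr)}{\log p}=8n-20 ,
\]
and the family is infinite, so $Q_{A(\Hu,n)}\ge 8n-20$.

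\textbf{The main obstacle.} The whole argument rests on the number-theoretic step of writing $1$ as a sum of $n-1$ Hurwitz integers of a common prime-power norm $D$ while keeping the resulting $n$-tuple free of vanishing (and of $1$-valued) proper subsums. For $n=3$ this is exactly the $\Tr(z)=1$ identity above and needs nothing more; for $n\ge 4$ one must split a given norm-$D$ Hurwitz integer $b$ as $b=b'+b''$ with $N(b')=N(b'')=D$, which reduces to finding a point $b'$ of the norm-$D$ shell of $\Hu$ with the prescribed value $\Tr(b'\overline{b})=D$ (one checks that then $b''=b-b'$ automatically has norm $D$ as well). Here one uses that this shell is large --- by Jacobi's formula for the number of representations as a sum of four squares --- and that $D$ may be taken arbitrarily large, so that the required value $D$ sits comfortably inside the admissible range $[-2D,2D]$ of $\Tr(b'\overline{b})$; the finitely many configurations ruled out by the genericity requirement can be avoided in the same way. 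The remaining verifications --- condition~(Z), condition~(G1), and the finite case analysis establishing condition~(S1) --- are routine and parallel those in the proof of Theorem~\ref{th:elkies}.
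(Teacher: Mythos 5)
Your strategy is genuinely different from the paper's (which uses Pell-equation quaternions $x$ with $\Tr(x)=2$, the powers $x^{2n-5},\overline{x}^{\,2n-5}$ and a recursion producing rational coefficients $c_i$), and it is appealing: for $n=3$ your argument is complete and correct, since the three-square/trace computation really does produce $(z,1-z,-1)$ with $N(z)=N(1-z)=p^{4}$, and the quality of each such triple is exactly $4$. The problem is the inductive splitting step on which every case $n\ge 4$ rests. You reduce splitting a norm-$D$ element $b$ into $b'+b''$ with $N(b')=N(b'')=D$ to finding $b'$ on the norm-$D$ shell with $\Tr(b'\overline{b})=D$, and you justify existence by saying the shell is large (Jacobi's formula) and $D$ lies inside the admissible range $[-2D,2D]$. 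That inference is a non sequitur: the shell has roughly $24\,\sigma(D)\asymp D$ points, while $\Tr(b'\overline{b})$ takes on the order of $4D$ integer values, so on average each trace-slice contains only $O(1)$ lattice points, and largeness of the ambient shell says nothing about whether the one slice you need is non-empty. What you are really asking for is a representation of a specific integer by a shifted ternary lattice whose discriminant grows with $D$ (equivalently, $c=2b'-b$ with $N(c)=3D$, $\Tr(c\overline{b})=0$, $c\equiv b \pmod{2\Hu}$), and such ternary representation problems are exactly where local obstructions and Duke-type difficulties live; no argument is given. The same objection applies, even more strongly, to the "genericity" clause: avoiding the finitely many forbidden subsum values requires the slice to contain \emph{many} admissible points, and you have not produced even one.

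The existence half of the gap is repairable by an explicit device: every unit $u=\tfrac12(1\pm\imagi\pm\imagj\pm\imagk)$ satisfies $\Tr(u)=1$, so $b=bu+b\overline{u}$ with $N(bu)=N(b\overline{u})=N(b)$ splits any norm-$D$ element into two norm-$D$ elements. But this fix destroys the other half of your argument: the resulting entries are rigidly of the form $z w$ and $(1-z)w'$ with $w,w'$ short products of trace-one units, there is no "generic choice" left (only $8$ unit choices per split), and the absence of vanishing subsums and of subsums equal to $1$ would have to be verified by an explicit, $n$-dependent case analysis, which the proposal does not contain. So as written the proof establishes the theorem only for $n=3$; for $n\ge 4$ the key lemma (a norm-$D$ splitting with control of all proper subsums) is asserted rather than proved, and the counting heuristic offered in its support does not suffice.
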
 
\begin{proof}
	First consider the case ${n = 3}$. Let $a$ and $b$ be natural numbers which satisfy the Pell equation ${a^2-2b^2=1}$. Note that there exist infinitely many solutions of this equation since $2$ is not a square and $(3,2)$ is a solution (see, for instance, Bundschuh~\cite[Subsection~4.3.3]{bundschuh}). Now fix any solution ${(a,b)}$ and consider~${y = a + b \imagi + b \imagj}$. Then Lemma~\ref{lem:quadrat}(a--b) implies  
	\[\begin{array}{r@{\;}c@{\;}l} -\imagi \cdot y^2 \cdot \imagi & = & -\imagi \cdot (a^2 - 2 b^2 + 2ab \imagi + 2ab \imagj) \cdot \imagi\\ 
		& = & -\imagi \cdot (1 + 2ab \imagi + 2ab \imagj) \cdot \imagi \\ 
		& = & 1 + 2ab \imagi - 2ab \imagj. \end{array} \] 
	An easy computation shows that ${x := y^2 \cdot (-\imagi) \cdot y^2 \cdot \imagi}={1 + 4ab \imagi - 8 a^2 b^2 \imagk}$. Furthermore, by Lemma~\ref{lem:quadrat}(c) we have ${-\imagj \cdot x \cdot \imagj = \overline{x}}$ and thus ${x + \overline{x} = \Tr(x) = 2}$.
	Then, for any~$y$ and~$x$ constructed in this way, the triples 
	\[ a(x) = 
	(x, \overline{x}, -2) \] 
	satisfy the zero sum condition~(Z) from Definition~\ref{def:U_allg}. Since $x$ is not a rational integer, it is also clear that there do not exist any non-trivial vanishing subsums. Finally, as ${x = 1 + 2 \cdot z}$ and ${\overline{x} = 1 - 2z}$ for some ${z \in \Hu}$, it is easy to see that $x$, $\overline{x}$~and~$-2$ are pairwise coprime; indeed, any common irreducible divisor $p$ of $x$ and $\overline{x}$~would also have to divide ${x + \overline{x} = 2}$. But then $p$ would divide~${1 = x - 2z}$ and~${1 = \overline{x} + 2z}$, which is impossible. 
	
	To compute the qualities of our triples, note that, by Proposition~\ref{prop:hamilton}(b--c),
	\[ N(x) = N(\overline{x}) = N(y)^4. \]
	Moreover, again using the multiplicativity of the norm, we see that
	\[ \rad(N(x \cdot \overline{x} \cdot 2)) = \rad(N(x)^2 \cdot 4) = \rad(N(y)^8 \cdot 4)\] 
	divides $2 N(y)$. Summarising, we obtain
	\[ q(a) \ge \frac{\log(N(y)^4)}{\log(2 N(y))} = \frac{4 \cdot \log(N(y))}{\log(N(y)) + \log(2)}. \] 
	Plugging different~$x$'s as above into this equation, constructed from their respective~$y$'s with larger and larger~$N(y)$, we see that 
	\[Q_{A(\Hu,3)} \geq \textstyle\limsup_x q(a(x)) = 4,\]
	which happens to equal $8 \cdot 3 - 20$ as required. 
	
	\medskip
	
	For the general case $n > 3$ we proceed with a similar but more complex argument using the same~$y$'s and $x$'s as before but involving higher powers. More precisely, consider the $n$-tuples
	\[
		a(x) := (x^{2n-5},\;\overline x^{\,2n-5},\;-c_{0},\;-c_{1} \cdot N(x),\;
		-c_{2} \cdot N(x)^2,\;\ldots,\;-c_{n-3} \cdot N(x)^{n-3});
	\]
	here the $c_i$'s are fixed coefficients that we will chose below. As before, there exist~$x$'s with arbitrarily high~$N(x)$ constructed from~$y$'s with arbitrarily high~$N(y)$; when we calculate the qualities we can therefore assume that these values are large enough for our purposes. To complete the proof we proceed in the following steps.
	\smallskip
	
	\begin{itemize}
		\item {\em Determining the coefficients $c_0,\ldots, c_{n-3}$:} 
		The equation 
		\begin{equation} \label{eq:rekursion} 
		x^{m+2}+ \overline x^{\,m+2}  =  
		2 \cdot (x^{m+1}+\overline x^{\,m+1}) -
		N(x) \cdot (x^m+\overline x^{\,m}) \tag{$\ast$} 
		\end{equation} 
		can be verified for every $m\in\N$ by an easy calculation. If we use 
		\[\begin{array}{r@{\;}c@{\;}r@{\;}c@{\;}r@{\;}c@{\;}r@{\;}c@{\;}r}
			x^0+\overline x^{\,0} & = & 2\rlap{,} \\
			x^1+\overline x^{\,1} & = & 2\rlap{,} \\
		\end{array}\]
		as induction starts and then apply the above equation inductively, we can generate the following schema of true formulae.
		
			\[\begin{array}{r@{\;}c@{\;}r@{\;}c@{\;}r@{\;}c@{\;}r@{\;}c@{\;}r}
				x^0+\overline x^{\,0} & = & 2\rlap{,} \\
				x^1+\overline x^{\,1} & = & 2\rlap{,} \\
				x^2 + \overline x^{\,2} & = & 4 &-& 2 \cdot N(x)\rlap{,} \\
				x^3 + \overline x^{\,3} & = & 8 &-& 6 \cdot N(x)\rlap{,} \\
				x^4 + \overline x^{\,4} & = & 16 &-& 16 \cdot N(x)&+&2 \cdot N(x)^2\rlap{,} \\
				x^5 + \overline x^{\,5} & = & 32 &-& 40 \cdot N(x)&+&10 \cdot N(x)^2\rlap{,} \\
				x^6 + \overline x^{\,6} & = & 64 &-& 96 \cdot N(x)&+&36 \cdot N(x)^2&-&2\cdot N(x)^3, \\
				x^7 + \overline x^{\,7} & = & 128 &-& 224 \cdot N(x)&+&112 \cdot N(x)^2 &-&14 \cdot N(x)^3.\\[-0.5em]
				&\vdots
			\end{array}\]
			
			An easy induction using~\eqref{eq:rekursion} shows that, for any odd~${m\in \N}$, the sum ${x^m + \overline x^{\, m}}$ can be written as a polynomial in $N(x)$ of degree ${d(m) := (m-1)/2}$ and with integer coefficients, while for even $m$ we obtain a polynomial of degree ${d(m) :=  m/2}$ in $N(x)$. For all $m\geq 0$ and $0 \leq i \leq d(m)$, we denote by ${c_{i,m}}$ the rational integers that occur in the equation 
			\begin{equation} \label{eq:koeff} x^m + \overline x^{\, m} = c_{0,m} + c_{1,m} \cdot N(x)  
			+ \ldots + c_{d(m),m} \cdot N(x)^{d(m)} \tag{$\ast\ast$} \end{equation}
			produced in this way.
			
			\smallskip
			
			Finally, for our given~$n$, we specifically consider $m:=2n-5$ and 
			let $c_i := c_{i,m}$ for all $i \in\{0,\ldots,d(m)\}$, where an easy calculation shows $d(m)=n-3$.
		
		\smallskip
		
		\item {\em Condition~\textnormal{(Z)} clearly holds by the equations above and by the choice of the $c_i$\!'s.}
		
		\smallskip
		
		\item {\em Verifying condition~\textnormal{(S1)}:} We claim that ${c_i > 0}$ whenever $i$ is even, and that ${c_i < 0}$ whenever $i$ is odd, and will prove this as Lemma~\ref{lem:koeff} below; in particular all $c_i$'s are non-zero. 
		
		Suppose we are given some $(b_1,\ldots,b_n)\in\{0,1\}^n$ such that~${\sum_{i=1}^n b_i \cdot a_i = 0}$, for at least one~$i$ we have ${b_i = 0}$, and for at least one~$j$ we have ${b_j = 1}$.
		Since $x^m$~and~$\overline x^{\,m}$ are the only entries in $a(x)$ with an imaginary component, we must have ${b_1=b_2}$. We can w.l.o.g.\ assume~ ${b_1=b_2=0}$; otherwise replace $(b_1,\ldots,b_n)$ by $(1-b_1,\ldots,1-b_n)$ which by~(Z) also has the necessary properties. 
		
		Thus we have 	
		\[ \sum_{i \in I} c_i \cdot N(x)^i = 0\] 
		for some ${\emptyset \neq I \subseteq \{0, 1, \ldots, n-3\}}$. Let ${j = \max_{i \in I} i}$; then	
		\[ \label{eq:invalid} 
			\sum_{i \in I \setminus \{j\}} c_i \cdot N(x)^i = -c_j \cdot N(x)^j. \tag{$+$} 
		\]  
		
		On the other hand, if $x$ has been chosen such that
		\[ N(x) > |c_0| + |c_1| + \ldots + |c_{n-3}|, \]
		then, using that ${|c_j| \geq 1}$ by Lemma~\ref{lem:koeff}, we have
		\[\begin{array}{r@{\;}c@{\;}c@{\;\cdot\;}l@{\;}l}
			\left| \sum_{i \in I \setminus \{j\}} c_i \cdot N(x)^i \right| & \le & \sum_i |c_i|  &N(x)^i \\[0.35em] 
			& \le & (\sum_i |c_i|)  &N(x)^{j-1} \\[0.35em] 
			& < & N(x)  &N(x)^{j-1} \\[0.35em] 
			& \le & |c_j|  &N(x)^j, 
		\end{array} \] 
		a contradiction to equation~\eqref{eq:invalid}.
				
		\smallskip
		
		\item {\em Verifying condition~\textnormal{(G1)}:} We first claim that $c_0=2^m$ for some~$m\in \N$. This is apparent in the equation schema above, and can be formally verified by observing that, for some ${w \in \Hu}$,
		\[ 2^m = (x+\overline{x})^m = x^m + \overline{x}^m + N(x) \cdot w.\] 
		On the other hand, recall that ${x = 1 + 2z}$ for some~${z \in \Hu}$. This is enough to ensure that the first and third components of~$a(x)$ are coprime with each other, which in turn establishes condition~\textnormal{(G1)}.
			
		\smallskip
				
		\item {\em Calculating the qualities:} Without further mention, we will use the multiplicativity of the norm, the fact that ${N(t) = t^2}$ for rational numbers $t$, and that \[{N(x) = N(\overline{x}) = N(y)^4}.\] 
			
		The numerator appearing in $q(a(x))$ is greater than or equal to 
		\[ \log(N(x^{2n-5})) = (2n-5) \log(N(x)) = (2n-5) \cdot 4 \log(N(y)). \] 
		Since the $c_i$'s were chosen as rational integers, the radical that appears in the denominator of $q(a(x))$ is a divisor of 
		$N(y) \cdot c_0 \cdot \ldots \cdot c_{n-3}$ and therefore 
		\[ q(a(x)) \geq \frac{4(2n-5)\cdot\log(N(y))}{\log(N(y)) + \log(c_0 \cdot \ldots \cdot c_{n-3})}. \] 
		Recall that the coefficients $c_i$ do not depend on~$x$ or~$y$. Thus, again plugging in different~$x$'s constructed from~$y$'s with larger and larger~$N(y)$ into this equation, we see that 
		\[Q_{A(\Hu,n)} \geq \textstyle\limsup_x q(a(x)) = 4(2n-5) = 8n-20.\qedhere\]
		
	\end{itemize}
\end{proof}
\noindent We conclude by establishing the claim used during the proof of Theorem~\ref{thm:Quaternionen-A}.
\begin{lem} \label{lem:koeff} 
	Let ${m \geq 1}$ and for $0 \leq i \leq d(m)$ consider the coefficients $c_{i,m}$ as defined in~\eqref{eq:koeff}. We have that ${c_{i,m} > 0}$ whenever $i$ is even, and that ${c_{i,m} < 0}$ whenever $i$ is odd.
\end{lem}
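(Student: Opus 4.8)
The plan is to read the linear recursion~\eqref{eq:rekursion} off at the level of coefficients and then remove the single minus sign by an obvious substitution. Comparing the coefficients of $N(x)^i$ on both sides of~\eqref{eq:rekursion}, together with the representation~\eqref{eq:koeff}, shows that the integers $c_{i,m}$ satisfy
\[ c_{i,m+2} \;=\; 2\,c_{i,m+1} \;-\; c_{i-1,m} \qquad (m\in\N,\ i\ge 0), \]
with the conventions $c_{i,m}=0$ for $i<0$ or $i>d(m)$, and with initial data $c_{0,0}=c_{0,1}=2$ and $c_{i,0}=c_{i,1}=0$ for $i\ge 1$ coming from $x^0+\overline x^{\,0}=x^1+\overline x^{\,1}=2$. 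Now set $e_{i,m}:=(-1)^i c_{i,m}$; the lemma is exactly the statement that $e_{i,m}>0$ for $0\le i\le d(m)$. Multiplying the displayed recursion by $(-1)^i$ turns it into
\[ e_{i,m+2} \;=\; 2\,e_{i,m+1} \;+\; e_{i-1,m}, \]
a recursion with no subtractions, which is the whole point.

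Next I would prove by simultaneous induction on $m$ the three claims: (i)~$e_{i,m}\ge 0$ for all $i\ge 0$; (ii)~$e_{i,m}=0$ for $i>d(m)$ (this is the degree assertion already announced in the text preceding the lemma); and (iii)~$e_{i,m}>0$ for $0\le i\le d(m)$. The base cases $m=0,1$ are immediate, since $d(0)=d(1)=0$, $e_{0,m}=2$, and $e_{i,m}=0$ for $i\ge 1$. For the inductive step from $m,m+1$ to $m+2$ one invokes the two elementary floor identities $d(m+2)=1+d(m)$ and $d(m+1)\le d(m+2)$. Nonnegativity of $e_{i,m+2}$ is clear from the sign-free recursion. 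If $i>d(m+2)$ then $i>d(m+1)$ and $i-1>d(m)$, so both summands vanish and $e_{i,m+2}=0$. For $i=0$ the term $e_{-1,m}$ vanishes, so $e_{0,m+2}=2\,e_{0,m+1}>0$. Finally, for $1\le i\le d(m+2)$ we have $0\le i-1\le d(m)$, hence $e_{i-1,m}>0$ by the inductive hypothesis while $e_{i,m+1}\ge 0$, so $e_{i,m+2}>0$; this closes the induction and proves the lemma.

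I do not expect a genuine obstacle: the mathematical content is entirely the observation that the sign-adjusted recursion has nonnegative coefficients, and the only thing needing a little care is the bookkeeping of the degrees $d(m)$ through the floor function — in particular checking that the leading term of $N(x)\cdot(x^m+\overline x^{\,m})$ in~\eqref{eq:rekursion} is not cancelled by $2\cdot(x^{m+1}+\overline x^{\,m+1})$, which is precisely what claim~(i) (nonnegativity of the $e_{i,m}$) ensures. An equivalent and perhaps slightly cleaner packaging, which I might use instead, is to substitute $T\mapsto -T$ in the universal polynomial $\sum_i c_{i,m}T^i$ from~\eqref{eq:koeff}, obtaining polynomials $\widetilde P_m$ with $\widetilde P_{m+2}=2\widetilde P_{m+1}+T\,\widetilde P_m$ and $\widetilde P_0=\widetilde P_1=2$, and then running the same induction to show that $\widetilde P_m$ has strictly positive coefficients in degrees $0$ through $d(m)$ and vanishes beyond degree $d(m)$.
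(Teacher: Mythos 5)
Your proof is correct and takes essentially the same route as the paper: both extract the coefficient recursion $c_{i,m+2}=2\,c_{i,m+1}-c_{i-1,m}$ from~\eqref{eq:rekursion} and run a two-step induction on $m$. Your substitution $e_{i,m}=(-1)^i c_{i,m}$, which makes the recursion subtraction-free, is just a cleaner packaging of the paper's direct observation that $2c_{i,r}$ and $-c_{i-1,r-1}$ carry the same sign $(-1)^i$, with the degree bookkeeping made explicit rather than implicit.
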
 
\begin{proof} 
	We proceed by induction. For ${m = 1}$, where we only have ${c_{0,1} = 2 > 0}$ to consider, the claim is trivially true; similarly for ${m = 2}$ with ${c_{0,2} = 4 > 0}$ and ${c_{1,2} = -2 < 0}$. 
	
	\smallskip
	
	Now suppose that the statement is true for all ${m \le r}$, for some even~${r \geq 2}$. We will show that the assertion also holds for ${m = r+1}$ and ${m = r+2}$.
	
	\begin{itemize}
		\item {\em $m = r+1$:} By the recursion formula~\eqref{eq:rekursion} we have 
		\[\begin{array}{c@{\;}l@{\;}}
			&x^m + \overline x^{\,m}\\[0.35em]
			 = & 2 \cdot (x^r + \overline x^{\,r}) - N(x) \cdot (x^{r-1} + \overline x^{\,{r-1}}) \\[0.35em] 
			 = & 2 \cdot \left(c_{0,r} + c_{1,r} N(x) + \ldots + c_{r/2,r} N(x)^{r/2}\right) \\[0.15em] 
			 & \;\; - N(x) \cdot \left(c_{0,r-1} + c_{1, r-1}N(x) + \ldots + c_{r/2-1,r-1} N(x)^{r/2-1}\right) \\[0.35em] 
			 = & 2 c_{0,r} + N(x) \cdot (2 c_{1,r} - c_{0,r-1}) + N(x)^2 \cdot (2 c_{2,r} - c_{1,r-1})  \\[0.15em] 
			 & \;\; + \ldots + N(x)^{r/2} \cdot (2c_{r/2,r} - c_{r/2-1, r-1}). 
		\end{array} \] 
		It follows that by construction we have 
		\[{c_{0,r+1} = 2 c_{0,r}} \quad\text{and}\quad {c_{i,r+1} = 2 c_{i,r} - c_{i-1,r-1}},\]
		and thus, using the induction hypothesis, it is easy to verify that 
		${c_{0,r+1} > 0}$ and that, for ${i \geq 1}$, we have that ${c_{i,r+1}}$ is positive for even~$i$ and negative for odd~$i$. 
		
		\smallskip
		
		\item {\em $m = r+2$:} Using the calculation
		\[\begin{array}{c@{\;}l@{\;}}
			&x^m + \overline x^{\,m}\\[0.35em]
			= & 2 \cdot (x^r + \overline x^{\,r}) - N(x) \cdot (x^{r-1} + \overline x^{\,{r-1}}) \\[0.35em] 
			= & 2 \cdot \left(c_{0,r+1} + c_{1,r+1} N(x) + \ldots + c_{r/2,r+1} N(x)^{r/2}\right) \\[0.15em]  
			 & \;\;- N(x) \cdot \left(c_{0,r} + c_{1, r}N(x) + \ldots + c_{r/2,r} N(x)^{r/2}\right) \\[0.35em] 
			 = &  2c_{0,r+1} + \sum_{i=1}^{r/2} N(x)^i \cdot (2 c_{i,r+1} - c_{i-1,r}) - c_{r/2,r} \cdot N(x)^{r/2+1}, 
		\end{array} \] 
		the proof proceeds in an analogous way.\qedhere
	\end{itemize}
\end{proof}

\end{document}